\theoremstyle{plain}\newtheorem{thm}{Theorem}[section]
\theoremstyle{plain}\newtheorem{ithm}{Theorem}
\theoremstyle{definition}\newtheorem{defn}[thm]{Definition}
\theoremstyle{definition}
\theoremstyle{definition}
\theoremstyle{definition}
\theoremstyle{definition}
\theoremstyle{definition}
\theoremstyle{definition}
\theoremstyle{definition}
\theoremstyle{plain}\newtheorem{prop}[thm]{Proposition}
\theoremstyle{definition}\newtheorem{rmk}[thm]{Remark}
\theoremstyle{plain}\newtheorem{lem}[thm]{Lemma}
\theoremstyle{definition}
\theoremstyle{definition}
\theoremstyle{plain}\newtheorem{cor}[thm]{Corollary}
\theoremstyle{plain}\newtheorem{icor}[ithm]{Corollary}
\theoremstyle{plain}
\theoremstyle{definition}\newtheorem{irmk}[ithm]{Remark}
\numberwithin{equation}{section}
\renewcommand{\aa}{\mathbb{A}}
\newcommand{\cc}{\mathbb{C}}
\newcommand{\rr}{\mathbb{R}}
\newcommand{\pp}{\mathbb{P}}
\renewcommand{\qq}{\mathbb{Q}}
\newcommand{\zz}{\mathbb{Z}}
\newcommand{\ff}{\mathbb{F}}
\renewcommand{\gg}{\mathbb{G}}
\newcommand{\nn}{\mathbb{N}}
\newcommand{\A}{\mathcal{A}}
\newcommand{\B}{\mathcal{B}}
\newcommand{\I}{\mathcal{I}}
\renewcommand{\O}{\mathcal{O}}
\newcommand{\Q}{\mathcal{Q}}
\newcommand{\V}{\mathcal{V}}
\newcommand{\W}{\mathcal{W}}
\newcommand{\sB}{\mathscr{B}}
\newcommand{\sP}{\mathscr{P}}
\newcommand{\sQ}{\mathscr{Q}}
\newcommand{\comment}[1]{}
\newcommand{\Pic}{\operatorname{Pic}}
\newcommand{\Gal}{\operatorname{Gal}}
\newcommand{\Br}{\operatorname{Br}}
\newcommand{\Sym}{\operatorname{Sym}}
\newcommand{\disc}{\operatorname{disc}}
\newcommand{\Spec}{\operatorname{Spec}}
\newcommand{\Gr}{\operatorname{Gr}}
\newcommand{\Fr}{\operatorname{Fr}}
\newcommand{\GL}{\operatorname{GL}}
\newcommand{\inv}{\operatorname{inv}}
\newcommand{\CH}{\operatorname {CH}}
\newcommand{\defi}[1]{\textsf{#1}}
\title[A Brauer--Manin obstruction to weak approximation]{A transcendental Brauer--Manin obstruction to weak approximation on \\ a Calabi--Yau threefold}
\author[Sachi Hashimoto, Katrina Honigs, Alicia Lamarche, Isabel Vogt]{Sachi Hashimoto, Katrina Honigs, Alicia Lamarche, Isabel Vogt  (with an appendix by Nicolas Addington)}
\newcommand{\ContactInfo}{{
  \bigskip
  \footnotesize
  
  Nicolas Addington, \textsc{Department of Mathematics, University of Oregon, Eugene, OR 97403}\par\nopagebreak
  \textit{E-mail address}: \texttt{adding@uoregon.edu}\par\nopagebreak
 \textit{URL}: \url{http://pages.uoregon.edu/adding/}

  \bigskip
  
  Sachi Hashimoto, \textsc{Department of Mathematics and Statistics, Boston University, Boston, MA 02215}\par\nopagebreak
  \textit{E-mail address}: \texttt{svh@bu.edu}\par\nopagebreak
  \textit{URL}: \url{http://math.bu.edu/people/svh/}

  \bigskip

  Katrina Honigs, \textsc{Department of Mathematics, University of Oregon, Eugene, OR 97403}\par\nopagebreak
  \textit{E-mail address}: \texttt{honigs@uoregon.edu}\par\nopagebreak
 \textit{URL}: \url{http://pages.uoregon.edu/honigs/}

  \bigskip
    
  Alicia Lamarche, \textsc{Department of Mathematics, University of Utah, Salt Lake City, UT 84112}\par\nopagebreak
  \textit{E-mail address}: \texttt{lamarche@math.utah.edu}\par\nopagebreak
 \textit{URL}: \url{http://alicia.lamarche.xyz}

  \bigskip
  Isabel Vogt, \textsc{Department of Mathematics, University of Washington, Seattle, WA 98195}\par\nopagebreak
  \textit{E-mail address}: \texttt{ivogt.math@gmail.com}\par\nopagebreak
 \textit{URL}: \url{http://faculty.washington.edu/ivogt/}

}}
\date{\today}
\begin{document}

\begin{abstract}
In this paper we investigate the $\qq$-rational points of a class of simply connected Calabi--Yau threefolds, which were originally studied by Hosono and Takagi in the context of mirror symmetry. These varieties are defined as a linear section of a double quintic symmetroid;  their points correspond to rulings on quadric hypersurfaces.
They come equipped with a natural $2$-torsion Brauer class. Our main result shows that under certain conditions, this Brauer class gives rise to a transcendental Brauer--Manin obstruction to weak approximation.

Hosono and Takagi showed that over $\cc$ each of these Calabi--Yau threefolds $Y$ is derived equivalent to a Reye congruence Calabi--Yau threefold $X$. We show that these derived equivalences may also be constructed over $\qq$, and we give sufficient conditions for $X$ to not satisfy weak approximation. In the appendix, N.~Addington exhibits the Brauer groups of each class of Calabi--Yau variety over $\cc$.
\end{abstract}

\maketitle

\section{Introduction}

Let $Y$ be a smooth projective variety defined over a number field $K$. 
The $K$-rational points of $Y$ embed diagonally in the adelic points  $Y(K) \subseteq Y(\aa_K) = \prod_{v} Y(K_v)$.  
Suppose $Y(\aa_K) \neq \emptyset$, that is, that $Y(K_v)$ is nonempty for all places $v$ of $K$. We say that $Y$ satisfies \defi{weak approximation} if $\overline{Y(K)} = Y(\aa_K)$  in the adelic topology. Let $\Br(Y)$ denote the Brauer group of $Y$.
Given any element $\alpha \in \Br(Y)$, there is an intermediate set $Y(\aa_K)^\alpha$ that contains the closure of the $K$-rational points
\[\overline{Y(K)} \subset Y(\aa_K)^\alpha \subset Y(\aa_K). \]
If $Y(\aa_K)^\alpha \neq Y(\aa_K)$, we say that $\alpha$ gives a \defi{Brauer--Manin obstruction to weak approximation} on $Y$.
Such an obstruction is \defi{transcendental} if $\alpha_{Y_{\bar{K}}}$ is nontrivial; otherwise it is algebraic.  

In this paper, we construct examples of Calabi--Yau threefolds $Y/\qq$ that naturally come with a $2$-torsion class $\alpha \in \Br(Y)[2]$ providing a transcendental Brauer--Manin obstruction to weak approximation.

Transcendental Brauer classes are often more difficult to describe explicitly than their algebraic counterparts. 
While the first examples of transcendental Brauer--Manin obstructions were constructed using extra structure such as fibrations \cite{Harari, Wittenberg, Ieronymou}, V\'arilly-Alvarado along with collaborators Hassett--Varilly \cite{HVAV}, Hassett \cite{HVA},  McKinnie--Sawon--Tanimoto \cite{MSTVA}, and Berg \cite{BVA} have demonstrated that a lattice-theoretic framework can be leveraged to exhibit transcendental Brauer--Manin obstructions on general K3 surfaces.  

Our Calabi--Yau threefold examples are of a similar flavor to those in \cite{HVAV}; in particular, we also leverage the data of linear spaces in a family of quadrics and prove that evaluation of our Brauer class has a very elegant and satisfying geometric description detailed in Sections~\ref{sec:Yalpha} and~\ref{sec:obstr_weak}.  Relatively few examples of transcendental obstructions on threefolds are known, and to the best of our knowledge, these are the first such examples on Calabi--Yau threefolds.
The Calabi--Yau threefolds we consider are also of independent interest in string theory. They have been studied over $\cc$ in a series of papers of Hosono and Takagi \cites{ht2,ht3,ht1},
but the present paper initiates the study of their arithmetic.

The varieties $Y = Y_P$ we consider are double covers of the locus of singular quadrics in a $4$-dimensional linear system $P$ of quadric hypersurfaces in $\pp^4$ over $\qq$, as we now describe.
A singular quadric threefold in $\pp^4$ is the cone over a quadric surface of the same rank in $\pp^3$.
Since rank $4$ quadric surfaces are doubly ruled by lines, a rank $4$ quadric hypersurface in $\pp^4$ is doubly ruled by $2$\nobreakdash-planes; furthermore, the $2$\nobreakdash-planes in a given ruling are parametrized by a conic. Similarly, a rank $3$ quadric in $\pp^4$ has a unique ruling by $2$\nobreakdash-planes, and the $2$-planes in this ruling are again parametrized by a conic.
Given a suitably generic (we call the precise condition \defi{regular}, see Definition~\ref{def:reg}) $4$\nobreakdash-dimensional linear system $P$ of quadric hypersurfaces in $\pp^4$,
we may associate to it a variety $Y_P$ that parameterizes the singular quadrics (of ranks $3$ and $4$) in $P$ along with a ruling of $2$-planes on the quadric.  
The incidence variety parameterizing singular quadrics in $P$ together with a choice of $2$-plane is then a conic bundle over $Y_P$.
This  \'etale $\pp^1$-bundle over $Y_P$ gives rise to a nontrivial class $\alpha = \alpha_P$ in $\Br(Y_P)$ \cite{ht1}*{Proposition~3.2.1}.  We show that $\alpha_P$ can provide a transcendental Brauer--Manin obstruction to weak approximation on $Y_P$.
 
It is not a priori clear whether the rational points of such $Y$ should satisfy weak approximation: some simple obstructions to weak approximation vanish.  The threefolds $Y$ that arise in this way are simply connected \cite{ht2}*{Proposition~4.3.4}, so the fundamental group does not provide an obstruction to weak approximation (see \cite{Minchev}*{Theorem~2.4.4} for a description of this obstruction).
Furthermore, there is no algebraic Brauer--Manin obstruction to weak approximation: Proposition \ref{prop:BrY} shows that the algebraic part of the Brauer group of $Y$ is trivial. In fact, it is a consequence of Proposition \ref{prop:brauerX} by
Addington that $\Br(Y_{\bar{\qq}})$ is generated by~$\alpha_{\bar{\qq}}$.

Our main result shows that with  fairly mild restrictions on the choice of quadrics defining the linear system \(P\), the resulting Brauer class $\alpha_P$ on the threefold $Y_P$ always provides a transcendental obstruction to weak approximation.

\begin{ithm}\label{thm:main}
Suppose that $P$ is a regular $4$-dimensional linear system of quadrics defined over $\qq$ that contains a quadric with signature
$(+1, +1, +1, +1, 0)$ or $(-1, -1, -1, -1, 0)$ over $\rr$.  
If $Y_P(\aa_\qq) \neq \emptyset$, then we have the proper containment
\[\emptyset \neq Y_P(\aa_{\qq})^{\alpha_P} \subsetneq Y_P(\aa_{\qq}).\]
In particular, the smooth Calabi--Yau threefold $Y_P$ does not satisfy weak approximation.
\end{ithm}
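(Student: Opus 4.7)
My plan is to evaluate $\alpha_P$ locally at each place and then use the freedom to modify adelic points at the archimedean place. The first ingredient is the geometric evaluation formula for $\alpha_P$: drawing on the description of $\alpha_P$ as the Brauer class of the \'etale $\pp^1$-bundle of singular-quadric-with-chosen-$2$-plane over $Y_P$, one shows that for any field $K$ and any $y\in Y_P(K)$, the class $\alpha_P(y)\in\Br(K)[2]$ is represented by the conic $C_y$ parameterizing the $2$-planes in the ruling on $Q_y$ specified by $y$. Consequently $\inv_v\alpha_P(y) = 0$ if and only if that ruling contains a $\qq_v$-rational $2$-plane.

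Next, I analyze the evaluation at the real place. A rank-$3$ quadric in $P(\rr)$ of signature $(+,+,+,0,0)$ is the cone over the $\rr$-anisotropic conic $x_1^2 + x_2^2 + x_3^2 = 0$; since its unique ruling is automatically $\rr$-rational, such a quadric lifts to a real point $y^- \in Y_P(\rr)$, and because $C_{y^-}$ has no $\rr$-point we get $\inv_\infty\alpha_P(y^-) = 1/2$. A rank-$3$ quadric of signature $(+,+,-,0,0)$ (or, alternatively, any real point of $Y_P$ whose ruling contains some real $2$-plane) analogously yields a point $y^+ \in Y_P(\rr)$ with $\inv_\infty\alpha_P(y^+) = 0$. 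To locate such real rank-$3$ quadrics inside $P(\rr)$, I combine the signature hypothesis with \defi{regularity} of $P$: the existence of a quadric of signature $(+,+,+,+,0)$ (respectively $(-,-,-,-,0)$) forces the positive (respectively negative) semidefinite locus of $P(\rr)$ to be a nonempty closed semialgebraic region whose stratified boundary must meet the rank-$3$ locus in a point of signature $(+,+,+,0,0)$ (respectively $(-,-,-,0,0)$), and a parallel argument produces the indefinite-signature point.

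The theorem then follows formally. Given any $(y_v)\in Y_P(\aa_\qq)$, form the Brauer sum $s = \sum_v \inv_v\alpha_P(y_v) \in \frac{1}{2}\zz/\zz$. Replacing the archimedean component $y_\infty$ by $y^+$ or $y^-$ changes $s$ by $\inv_\infty\alpha_P(y^\pm) - \inv_\infty\alpha_P(y_\infty)$, and since $y^+, y^-$ realize both values $0$ and $1/2$, one choice yields $s = 0$ (so $Y_P(\aa_\qq)^{\alpha_P}\neq\emptyset$) and the other yields $s = 1/2$ (so $Y_P(\aa_\qq)^{\alpha_P}\subsetneq Y_P(\aa_\qq)$).

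The principal difficulty lies in the middle step: extracting from the signature hypothesis a real rank-$3$ quadric of signature $(+,+,+,0,0)$ in $P$. This is a semialgebraic question about the real symmetroid of $P$, and the \defi{regularity} assumption provides the transversality that makes the stratification argument work. Once this is in hand, the geometric evaluation formula and the adelic adjustment argument complete the proof essentially mechanically.
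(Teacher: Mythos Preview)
Your overall strategy---find real points $y^+$ and $y^-$ where $\alpha_P$ evaluates to $0$ and $\tfrac{1}{2}$ respectively, then adjust the archimedean component of an adelic point---is exactly right, and your final paragraph matches the paper's Lemma~\ref{lem:real_obstruction}.  But the two middle steps diverge from the paper in important ways.

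For the point with $\inv_\infty\alpha_P(y^-)=\tfrac{1}{2}$, you work harder than necessary.  You go searching for a rank-$3$ quadric of signature $(+,+,+,0,0)$ via a stratification of the semidefinite locus in $P(\rr)$, but the hypothesis already hands you a rank-$4$ quadric $Q_h$ of signature $(+,+,+,+,0)$ that does the job directly: the reduced quadric $\bar{Q}_h\subset\pp^3_\rr$ has discriminant $1$, so both rulings are $\rr$-rational and the fiber over $[Q_h]$ in $Y_P$ consists of two real points; and $\bar{Q}_h(\rr)=\emptyset$, so by Lemma~\ref{lem:trivial_evaluation_criterion} the evaluation at each is $\tfrac{1}{2}$.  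Your claim that the semidefinite locus in $P(\rr)$ must meet the rank-$3$ locus is not obviously true (the semidefinite locus could, a priori, be a single point), and in any case it is not needed.

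The genuine gap is the point with $\inv_\infty\alpha_P(y^+)=0$.  You assert that ``a parallel argument produces the indefinite-signature point,'' but no argument is given, and it is not clear what region's boundary you intend to stratify or why regularity forces it to meet the isotropic rank-$3$ locus.  The paper handles this by a completely different route: Lemma~\ref{lem:evaluation_real} shows that $Z_P(\rr)\neq\emptyset$ for \emph{every} $P$, independent of the signature hypothesis, by computing the Stiefel--Whitney class $w_2(\Sym^2\mathcal{S}^\vee_\rr)$ on the real Grassmannian $\gg(2,\pp^4_\rr)$ and checking it is nonzero.  Any real point of $Z_P$ maps to a real point $y^+$ of $Y_P$ whose ruling contains a real $2$-plane, so $\alpha_P(y^+)=0$.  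This topological input is the key idea you are missing; without it (or a substitute), the proof is incomplete.
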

\goodbreak

The proper containment $Y_P(\aa_{\qq})^{\alpha_P} \subsetneq Y_P(\aa_{\qq})$ arises in these cases because the Brauer class $\alpha_P$ has nonconstant evaluation at the real points of $Y_P$: 
we have chosen $P$ so that $Y_P(\rr)$ contains a point where $\alpha_P$ has nontrivial evaluation and we show that for \emph{any} choice of $P$, the set $Y_P(\rr)$ always contains a point where $\alpha_P$ has trivial evaluation.

Linear systems $P$ containing a quadric of the desired signature and giving rise to locally soluble \(Y_P\) are common; in the proof of the following corollary, we produce one such example.

\begin{icor}\label{cor:easy}
  Let $P$ be a general $4$-dimensional linear system of quadrics defined over $\qq$ that contains the quadric $x_0^2 + x_1^2 + x_2^2 + x_3^2 = 0$.  Then $Y_P(\aa_{\qq}) \neq \emptyset$ and \(Y_P\) does not satisfy weak approximation.
\end{icor}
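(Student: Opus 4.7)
The plan is to reduce Corollary~\ref{cor:easy} to Theorem~\ref{thm:main} by verifying its hypotheses for a suitable $P$ containing $Q_0 := x_0^2 + x_1^2 + x_2^2 + x_3^2$. First, regarded as a quadric in $\pp^4$, $Q_0$ has real signature $(+1,+1,+1,+1,0)$, so the signature hypothesis of Theorem~\ref{thm:main} is automatic for every $4$-dimensional linear system containing $Q_0$. Regularity in the sense of Definition~\ref{def:reg} cuts out a Zariski-open subset of the affine space parametrizing $4$-dimensional linear systems through $Q_0$; I would confirm that this open set is nonempty either by appealing to the generic smoothness results of Hosono--Takagi \cite{ht2} or by checking an explicit basis directly.

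The substantive task is adelic solubility $Y_P(\aa_\qq) \neq \emptyset$. The cleanest approach is to exhibit a $\qq$-rational point on $Y_P$, which automatically supplies an adelic point. Recalling that $Y_P$ parameterizes pairs (singular quadric in $P$, choice of ruling on that quadric), I would arrange for $P$ to contain a second singular quadric whose rulings are defined over $\qq$. A convenient choice is $Q_1 := x_0 x_4 + x_1 x_2$, which has rank $4$ with cone apex $(0:0:0:1:0)$ and is a cone over the Segre quadric $x_0 x_4 + x_1 x_2 = 0$ in $\pp^3$; its two rulings are defined over $\qq$ because, for instance, the $2$-plane $\{x_0 = x_1 = 0\} \subset \pp^4$ lies on $Q_1$. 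This yields an explicit pair $(Q_1, \Pi) \in Y_P(\qq)$.

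Taking $P = \langle Q_0, Q_1, Q_2, Q_3, Q_4 \rangle$ for a general choice of the remaining quadrics $Q_2, Q_3, Q_4$, Theorem~\ref{thm:main} then applies and yields the failure of weak approximation on $Y_P$. The principal technical step that I expect to require care is verifying that the Zariski-open regularity locus meets the codimension-two affine subspace $\{P : Q_0, Q_1 \in P\}$ in a nonempty open set; this should reduce to an explicit computation with the discriminantal quintic hypersurface attached to $P$, since Definition~\ref{def:reg} is a transverse-intersection type condition on that locus, and in the worst case can be checked on a single sample $P$ by computer algebra.
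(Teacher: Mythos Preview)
Your approach is correct and follows the same overall strategy as the paper: verify the signature hypothesis of Theorem~\ref{thm:main}, produce a $\qq$-point on $Y_P$ to ensure $Y_P(\aa_\qq)\neq\emptyset$, and check regularity on one explicit example (which the paper does by a Magma computation). The one place where you take a longer route is in producing the $\qq$-point. You introduce an auxiliary rank-$4$ quadric $Q_1 = x_0x_4 + x_1x_2$ with $\qq$-rational rulings, but the paper observes that $Q_0$ itself already does this job: $Q_0 = x_0^2+x_1^2+x_2^2+x_3^2$ is the cone over a smooth quadric surface in $\pp^3$ of discriminant $1$, so by the discussion around \eqref{eq_disc} its two rulings are defined over $\qq$, and the fiber of $Y_P \to H$ over $[Q_0]$ already consists of two $\qq$-points. (The fact that $Q_0$ has no smooth $\qq$-points is irrelevant here---that only governs the evaluation of $\alpha_P$, not the existence of a point on $Y_P$.) Thus $Q_0$ serves double duty, supplying both the signature condition and the $\qq$-point, and you only need to verify regularity on the codimension-$1$ locus of linear systems through $[Q_0]$ rather than the codimension-$2$ locus through $[Q_0]$ and $[Q_1]$. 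Your version works, but recognizing that $\disc(\bar Q_0)=1$ streamlines the argument.
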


The proof of Theorem \ref{thm:main} centers upon controlling the evaluation of \(\alpha_P\) at real points of \(Y_P\).
For other places, we show that for a random choice of \(P\), the class \(\alpha_P\) has trivial evaluation at every \(p\)-adic point of \(Y_P\) for \emph{all} finite places \(p\) with probability at least \(.73\).   
To make this precise, we must explain how we generate a random \(P\).  We do this by specifying (integral) basis vectors for the \(5\)-dimensional subspace of \(\aa^{15}\) corresponding to this plane. We make this density statement precise in Section \ref{expectedbehavior}. 

\begin{ithm}\label{thm:example}
Among all $4$-dimensional linear systems of quadrics $P$ defined over $\zz$, those where the Brauer class $\alpha_P$ has trivial evaluation at all of the points in $Y_P(\qq_p)$ for every prime $p$ occur with density at least $.73$.

Furthermore, the following five quadrics in $\pp^4$:
\begin{align*}
&x_0x_1 + x_2x_3\\
&x_0^2 + x_0x_1 + 3x_0x_2 + 2x_1^2 + x_1x_2 + x_1x_3 + 5x_2^2 + x_2x_3 + x_3^2\\
&x_0^2 + 2x_0x_2 + x_0x_3 + x_0x_4 + x_1x_2 + x_1x_4 + x_2x_4 + 4x_3x_4\\
&4x_0^2 + 4x_1^2 + 3x_1x_3 + 2x_1x_4 + 3x_2^2 + x_2x_4 + x_4^2\\
&x_0^2 + x_0x_2 + x_0x_3 + 4x_0x_4 + 3x_1x_2 + 4x_1x_4 + 2x_2x_3 + 2x_3^2 + x_4^2
\end{align*}
generate a regular linear system $P$ for which we have the containments
\[\emptyset \neq Y_P(\qq) \subset Y_P(\aa_{\qq})^{\alpha_P} \subsetneq Y_P(\aa_{\qq}),\]
and $\alpha_P$ has trivial evaluation at every point of $Y_P(\qq_p)$ for every prime $p$.
\end{ithm}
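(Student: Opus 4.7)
The plan is to handle the two assertions separately: the density lower bound on linear systems, and the explicit verification for the specific $P$.

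For the density statement I would use a local-to-global (Ekedahl-style) computation. For each prime $p$, let $\mu_p$ denote the density of $P \in \aa^{75}(\zz_p)$ for which the evaluation $\alpha_P \colon Y_P(\qq_p) \to \Br(\qq_p)$ is identically zero; the global density is then $\prod_p \mu_p$. The crucial input is purity for the Brauer group on regular schemes: if $P \bmod p$ is itself regular in the sense of Definition~\ref{def:reg}, then the associated integral model $\mathcal{Y}_P \to \Spec \zz_p$ is smooth, $\alpha_P$ extends to a class on $\mathcal{Y}_P$, and since $Y_P$ is proper each $\qq_p$-point extends to a $\zz_p$-point, at which the extended class evaluates in $\Br(\zz_p)=0$. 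Thus $\mu_p$ is at least the $\zz_p$-density of $P$ with regular reduction, and a lower bound on this density can be computed from the discriminant locus in $\aa^{75}_{\ff_p}$. For the finitely many small primes where this estimate alone is not sufficient (above all $p=2$), one computes $\mu_p$ directly using the explicit geometric description of $\alpha_P$ in terms of rulings on rank-$3$ and rank-$4$ quadrics. Multiplying these together is intended to produce $\prod_p \mu_p \geq .73$.

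For the explicit example, I would verify four properties in turn. First, regularity of $P$ is a finite discriminant check against Definition~\ref{def:reg}. Second, nonemptiness of $Y_P(\qq)$: exhibit a rational singular quadric in $P$ together with a rational ruling, found by a direct computer search. Third, triviality of $\alpha_P$ at every $\qq_p$-point for every $p$: the local purity argument from the first part reduces this to a finite list of bad primes (those dividing an appropriate discriminant of $P$), at which one checks triviality directly using the explicit formula for $\alpha_P$. Fourth, the strict containment $Y_P(\aa_\qq)^{\alpha_P} \subsetneq Y_P(\aa_\qq)$: by Theorem~\ref{thm:main} it suffices to exhibit an $\rr$-linear combination of the five basis quadrics with signature $(+,+,+,+,0)$ or $(-,-,-,-,0)$, and combine this with nonemptiness of $Y_P(\aa_\qq)$, which is implied by $Y_P(\qq) \neq \emptyset$.

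The hard part will be the sharp lower-bound computation of $\mu_p$ at the small primes, especially $p=2$, where the $2$-torsion Brauer class evaluation is most subtle and where a crude estimate on the density of regular reductions is unlikely to suffice to drive the product $\prod_p \mu_p$ above $.73$. A secondary obstacle for the explicit example is enumerating the primes of bad reduction and verifying triviality of $\alpha_P$ at each $\qq_p$-point individually; this is a concrete but potentially lengthy finite check, carried out via explicit computer algebra, as is the search for the rational point on $Y_P$ and the verification of regularity.
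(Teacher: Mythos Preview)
Your outline for the explicit example is broadly on track and close to the paper's argument, though the paper streamlines two of your steps: the rational point on $Y_P$ comes for free from the first basis quadric $Q_0 = x_0x_1 + x_2x_3$ (rank $4$, square discriminant), and the signature condition of Theorem~\ref{thm:main} is witnessed directly by the second basis quadric $Q_1$, so no search is needed.  More importantly, the paper avoids your ``finite list of bad primes'' entirely: it shows via a single Gr\"obner basis computation over~$\zz$ that the integral $4$-plane $\sP$ misses the rank-$\leq 2$ locus $\V_3 \subset \pp^{14}_\zz$ scheme-theoretically, which by Lemma~\ref{lem:sm_pt_criterion} and Hensel forces every quadric $Q_y$ with $y \in Y_P(\qq_p)$ to have a smooth $\qq_p$-point for \emph{every} $p$ at once, and then Lemma~\ref{lem:trivial_evaluation_criterion} gives $\alpha_P(y)=0$.

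The density argument, however, has a genuine gap.  Requiring $P \bmod p$ to be regular (so that $\mathcal{Y}_P/\zz_p$ is smooth and purity applies) is a codimension-$1$ condition on $\Gr(4,14)_{\ff_p}$: regularity is equivalent to smoothness of the complete intersection $\tilde{X}_P$ (plus a diagonal condition), and the discriminant of such a family is a hypersurface.  Hence $1-\mu_p$ is of order $c/p$, the sum $\sum_p(1-\mu_p)$ diverges, and $\prod_p \mu_p = 0$; no amount of sharpening at small primes can rescue this.  You have also misidentified the hard part as $p=2$ --- the actual obstruction is the tail.  The paper's fix is to replace good reduction of $\mathcal{Y}_P$ by the much weaker local condition ``every quadric in $P \bmod p$ has a smooth $\ff_p$-point,'' which via Lemma~\ref{lem:trivial_evaluation_criterion} still forces $\alpha_P$ to evaluate trivially on $Y_P(\qq_p)$.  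This condition holds whenever $P \bmod p$ misses the rank-$\leq 2$ locus $\V_3$, and the $4$-planes meeting $\V_3$ form a codimension-$2$ subscheme of $\Gr(4,14)$, so the Ekedahl sieve applies and the product converges to something one can bound below by~$.73$.
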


We also give an explicit quaternion algebra over $\qq(Y_P)$  in \eqref{explicit_alpha_brauer} that represents $\alpha_P$ in this case.

\begin{irmk}
In Theorems \ref{thm:main} and \ref{thm:example}, the fact that $\alpha_P$ \emph{always} has trivial evaluation at some real point of $Y_P$ allowed us to specify information only about the real place (i.e., the signature) for one quadric in $P$.  Instead, we can use a finite place $p$ to obstruct weak approximation if we specify $p$-adic data for \emph{two} quadrics in $P$.  For example, working with the prime $p=3$, we show in Proposition \ref{prop:q3} that for
$P$ a general $4$-dimensional linear system of quadrics over $\qq$ containing
$6x_0^2 -3x_1^2 + 2x_2^2 - x_3^2=0 $ and
$x_0x_1 + x_2x_3= 0$, the corresponding $Y_P$ does not satisfy weak approximation.
\end{irmk}

Our examples are also of interest in the study of derived equivalence, particularly in comparing the rational points of derived equivalent varieties. For instance, the question posed by Hassett and Tschinkel \cite{HT} 
of how the existence and density of $\qq$-rational points on derived equivalent K3 surfaces over $\qq$ compares remains open.
It was shown in \cite{aafh} that for hyperk\"ahler fourfolds and abelian varieties of dimension at least $2$, the existence of a $\qq$-rational point is not a derived invariant.
Hence Calabi--Yau threefolds are a very interesting intermediate type of variety to study with regard to this question.
Hosono and Takagi showed that each of the Calabi--Yau threefolds $Y_P$ we study here is derived equivalent, but not birational, to another Calabi--Yau threefold $X_P$ also attached to the linear system \(P\).
We prove that these derived equivalences may be constructed over $\qq$ as well, and show that in many cases the $\qq$-points of these derived equivalent varieties behave similarly. We believe that this class of examples will open many questions for future study.

\smallskip

In Section \ref{sec:Yalpha} we outline the construction of the Calabi--Yau threefolds $Y_P$ along with the Brauer class $\alpha_P \in \Br(Y_P)[2]$ and give some preliminary results about these objects.
In Lemma~\ref{lem:evaluation_real} we show that for all  $(Y_P, \alpha_P)$,  the set $Y_P(\rr)$ contains a point at which $\alpha_P$ has trivial evaluation.
In \eqref{explicit_alpha_brauer} we give explicit equations for the quaternion algebra in Theorem~\ref{thm:example}.
In Section \ref{sec:obstr_weak} we review the construction of the intermediate set $Y_P(\aa_{\qq})^{\alpha_P}$, discuss the evaluation of $\alpha_P$, and give equivalent geometric criteria for this evaluation to be trivial.  We prove Theorem~\ref{thm:main} and Corollary \ref{cor:easy} in Section \ref{sec:mainproof}. We prove that a random $P$ gives an $\alpha_P$ with constant trivial evaluation at all finite primes in Section \ref{expectedbehavior} and discuss the explicit example of Theorem \ref{thm:example} in Section \ref{sec:example}. Finally, we end with a discussion of the derived equivalent threefolds $X_P$ in Section~\ref{sec:derivedequiv}. We construct $X_P$ over $\qq$ and show that kernel of Hosono and Takagi's equivalence is defined over $\qq$. We show in Lemma~\ref{lem:waX} that if the linear system $P$ contains a singular quadric defined over $\qq$, then $X_P$ does not satisfy weak approximation.

\subsection{Acknowledgments}

We thank Nick Addington for many helpful conversations. In particular, we thank him for the idea behind Lemma~\ref{lem:evaluation_real} and for providing the appendix.  We also thank Brendan Hassett, Daniel Krashen, Bjorn Poonen, Anthony V\'arilly-Alvarado, and Bianca Viray for helpful discussions and comments on an earlier draft.  We thank the anonymous referees for many helpful comments and suggestions that have improved the results and exposition of the paper.  This collaboration started at the American Mathematical Society Mathematics Research Communities on ``Explicit methods in arithmetic geometry in characteristic $p$''; we thank the American Mathematical Society and National Science Foundation for funding this workshop.

\subsection{Funding}

S.H. was supported by a Clare Boothe Luce Fellowship (Henry Luce Foundation) and by National Science Foundation grant DGE-1840990. A.L. was supported by National Science Foundation grants RTG-1840190 and DMS-2103271. I.V. was supported by National Science Foundation grant DMS-1902743.

\subsection{Code Availability}

The code associated to this paper can be found at the github repository \cite{githubrepo} and on the supplementary files with the arXiv preprint of this paper.

\section{The Calabi--Yau threefold $Y$ and Brauer class $\alpha$}\label{sec:Yalpha}

We begin by reviewing the construction of the Calabi--Yau threefold $Y_P$ associated to a regular $4$-dimensional linear system \(P\) of quadrics in $\pp^4$.  This construction proceeds through the auxiliary incidence variety $Z_P$, which gives the nontrivial $2$-torsion Brauer class $\alpha_P \in \Br(Y)[2]$.  We then give a quaternion algebra over the function field of $Y_P$ that represents it.

When the linear system \(P\) is unambiguous, we write \(Y = Y_P\), \(Z = Z_P\) and \(\alpha = \alpha_P\) for brevity.

\subsection{The variety $Y$}\label{sec:Y}

We now give a detailed description of the Calabi--Yau threefolds $Y_P$ on which we will obstruct weak approximation. These varieties are constructed from the geometry of linear spaces in a family of quadrics; see \cite{Reid-thesis}*{Chapter 1} or \cite{Harris}*{Lecture~22} for an introduction to this topic.  

Let $K$ be a field not of characteristic $2$. 
Recall that $Y_P$ is attached to a choice of linear system $P$.  Explicitly, let $Q_0, \dots, Q_4$ be five independent quadrics in $\pp_{K}^4$.  These define a linear system $P \simeq \pp^4_{(t_0, \dots, t_4)}$ by
\[ P \colonequals \left| t_0 Q_0 + t_1 Q_1 + t_2 Q_2 + t_3 Q_3 + t_4 Q_4 \right| \subseteq \pp H^0(\pp^4, \O_{\pp^4}(2)) \simeq \pp_{K}^{14}. \]

Equivalently, by the correspondence between symmetric bilinear forms  and quadratic forms, we may view $P$ as defining a linear system of $(1,1)$-divisors in $\pp^4 \times \pp^4$.  We will always impose the following genericity assumption on the linear system.

\begin{defn}\label{def:reg}
A linear system $P\simeq \pp_K^4$ of $(1,1)$-divisors in $\pp^4 \times \pp^4$ is \defi{regular} if their intersection is smooth and does not intersect the diagonal in~$\pp^4 \times \pp^4$. 
\end{defn}

\begin{rmk}We adopt the above perspective of viewing $P$ as a linear system of $(1,1)$-divisors in $\pp^4\times \pp^4$ in anticipation of the construction of the Calabi-Yau threefold $X$ in Section \ref{sec:defx}. Note that there are two distinct copies of $\pp^4$, with one being dual to the other. This projective duality arises in the study of the derived equivalence between $X$ and $Y$. 
\end{rmk}

By \cite{ht3}*{Proposition~2.1}, this may be equivalently phrased purely in terms of the incidence geometry of lines in the quadrics: a linear system is regular if and only if
it is base-point free, and 
any (geometric) line $\ell \subseteq \operatorname{Sing}(Q)$ for some $Q$ in $P(\overline{K})$ is not contained in a sublinear system of dimension at least $2$ in $P$.
In particular, regularity guarantees that the singular quadrics in $P$ have rank $3$ or $4$.

Suppose that $P$ is a regular linear system.  Write $\Q \xrightarrow{f} P$ for the universal quadric over $P$.  Over a point $t = (t_0 : t_1 : t_2: t_3 : t_4) \in P$, we write $Q_t$ for the corresponding quadric in $\pp^4$, with equation
$\sum_{i=0}^4 t_i Q_i = 0$.
 Let $Z_P$ be the relative Fano scheme of $2$-planes contained in the fibers of $f$
\[Z_P \colonequals F(2,\Q/P) =  \Bigl\{ (t, [\Lambda] ) \in P \times \gg(2, \pp^4) : \Lambda \subset Q_t, \text{i.e., } \sum_i t_i Q_i|_{\Lambda} = 0\Bigr\}. \]
Then $Z_P$ admits two natural projections 
\begin{equation}\label{projections} Z_P \xrightarrow{\pi_1} P, \qquad Z_P \xrightarrow{\pi_2} \gg(2, \pp^4).\end{equation}
Since any $3$-dimensional quadric hypersurface containing a $2$-dimensional linear space is necessarily singular, the image of the first projection of $Z_P$ to $P$ is the locus $H \subset P$ of singular quadrics, that is, cones over quadric surfaces in $\pp^3$.  As we observed in the introduction, lines on quadric surfaces come in two rulings, so the map $Z _P\to H$ does not have connected fibers.
The threefold $Y_P$ of interest can be seen in the nontrivial Stein factorization:
\[\pi_1 \colon Z_P \xrightarrow{\text{conic bundle}} Y_P \xrightarrow{2:1}  H \subseteq P.\]
Hosono and Takagi show that if $P$ is regular, then $Y_P$ is smooth \cite{ht3}*{Proposition~3.11}. 
The conic bundle $Z_P$ gives us our $2$-torsion Brauer class $\alpha_P \in \Br(Y)[2]$, and $(\alpha_P)_{\bar{K}}$ is nontrivial by \cite{ht1}*{Proposition~3.2.1}.

\subsection{Rational points on $Y$}
For any field $L/K$, an $L$-rational point on $Y_P$ corresponds to a singular quadric from $P$ along with a choice of ruling both defined over $L$.  Given a rank $4$ (singular) quadric $Q$ in $\pp^4(L)$, 
we write $\bar{Q}$ for the smooth quadric in $\pp^3$ over which $Q$ is a cone.
By regularity of $P$, the points in $Y_P$ are over quadrics in
$\pp^4(L)$
of ranks only $3$ or $4$, hence the finite morphism $Y\to H$ is ramified over the locus of quadrics of rank~$3$.
The rulings of $2$-planes on~$Q$ (equivalently, lines on $\bar{Q}$) are defined over $L\bigl(\sqrt{\disc(\bar{Q})}\bigr)$ \cite{wang}*{Section~2.2, footnote on p.~366}.  Therefore, an $L$-point in the (open) rank $4$ locus of $H$ lifts to an $L$-point of $Y_P$ if and only if the corresponding quadric in $\pp^3$ has square discriminant.
The one ruling of $2$-planes on a quadric of rank $3$ is defined over $L$, and thus any $L$-point in the rank $3$ locus of $H$ lifts to an $L$-point of $Y_P$.

Over the generic point of $H$, the corresponding quadric $\Q_{\qq(H)}$ has rank exactly $4$; as such it is the cone over a smooth rank $4$ quadric $\bar{\Q}_{\qq(H)}$ in $\pp^3$.  Therefore the function field $\qq(Y)$ is the discriminant quadratic extension 
\begin{equation}\label{eq_disc}
\qq(Y) = \qq(H)\Bigl( \sqrt{\disc(\bar{\Q}_{\qq(H)})}\,\Bigr)
\end{equation}
of $\qq(H)$ over which the rulings of $\bar{\Q}_{\qq(H)}$ are defined.
We make this explicit in Lemma~\ref{lem:explicit_fun_field}.
Moreover, $Y$ is determined by its function field: since $Y$ is a Stein factorization it is normal and therefore by \cite[I.1 \S10 Proposition~10.1]{sga}
it is the normalization of $H$ inside the field extension $\qq(Y)$.  

\begin{defn}
  Given a quadratic form $Q$, we call $B_Q(x,y) = Q(x+y) -Q(x) - Q(y)$ the associated symmetric bilinear form.
  Write $\B$ for the universal symmetric bilinear form corresponding to the universal quadric $\Q$.
\end{defn}

Note that $2Q(x) = B_Q(x,x)$, and hence if $\operatorname{char}(K)\neq 2$, the quadratic form \(Q(x)\) can be recovered from the bilinear form \(B_Q(x,x)\). 

Given a symmetric $5\times5$ matrix $B$
and subsets $I, J \subseteq \{0,\dots, 4\}$, we define
$B_{I,J}$ to be the minor of $B$ obtained by deleting the $i$th rows and $j$th columns for $i \in I, j \in J$ (i.e. $B_{I,J}$ is the the determinant of the resulting submatrix).

\begin{lem}\label{lem:explicit_fun_field}
For any index $i$, if $\B_{i,i}$ is not identically $0$ on $H$, then
\[\qq(Y) = \qq(H)\left(\sqrt{\left(\B_{\qq(H)}\right)_{i,i}}\,\right).\]
\end{lem}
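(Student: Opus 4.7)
The plan is to combine equation \eqref{eq_disc} with an explicit realization of $\bar{\Q}_{\qq(H)}$ as a hyperplane section of the cone $\Q_{\qq(H)}$. By \eqref{eq_disc},
\[\qq(Y) = \qq(H)\Bigl(\sqrt{\disc(\bar{\Q}_{\qq(H)})}\,\Bigr),\]
so it suffices to show that $\disc(\bar{\Q}_{\qq(H)})$ and $(\B_{\qq(H)})_{i,i}$ represent the same square class in $\qq(H)^\times/\qq(H)^{\times 2}$. Recall that the discriminant of a smooth quadric in $\pp^3$ is only well-defined modulo squares, since linear changes of coordinates on $\pp^3$ scale the Gram matrix determinant by a nonzero square.

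The universal quadric $\Q_{\qq(H)}$ has rank exactly $4$ over $\qq(H)$, and hence has a unique singular point $p = [v] \in \pp^4_{\qq(H)}$, defined over $\qq(H)$ as the (single) point in the projectivization of the kernel of the symmetric bilinear form $\B_{\qq(H)}$. The cone $\Q_{\qq(H)}$ is projectively equivalent, via projection from $p$ onto any hyperplane $\Pi \subset \pp^4$ with $p \notin \Pi$, to $\Q_{\qq(H)} \cap \Pi$, and any two such hyperplane sections differ by a linear change of variables. Hence any such intersection provides a model for $\bar{\Q}_{\qq(H)}$ with the same discriminant mod squares.

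Now I specialize to the hyperplane $\Pi_i \colonequals \{x_i = 0\} \subset \pp^4_{\qq(H)}$. The Gram matrix of the restriction of $\B_{\qq(H)}$ to $\Pi_i$ (in the coordinates $x_0,\ldots,\widehat{x_i},\ldots,x_4$) is obtained by deleting the $i$-th row and column of the $5 \times 5$ matrix $\B_{\qq(H)}$, so its determinant is $(\B_{\qq(H)})_{i,i}$ by definition. This restriction is non-degenerate if and only if $\Pi_i$ intersects the kernel of $\B_{\qq(H)}$ trivially, i.e., if and only if $v_i \neq 0$, i.e., if and only if $p \notin \Pi_i$. Consequently, the two conditions $(\B_{\qq(H)})_{i,i} \neq 0$ and $p \notin \Pi_i$ coincide; under the hypothesis of the lemma, both hold, so $\Q_{\qq(H)} \cap \Pi_i$ is a smooth quadric in $\Pi_i \cong \pp^3$ that serves as a model for $\bar{\Q}_{\qq(H)}$, with $\disc = (\B_{\qq(H)})_{i,i}$. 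Combining with \eqref{eq_disc} yields the claim.

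The main (small) obstacle is the equivalence between non-vanishing of the principal minor $(\B_{\qq(H)})_{i,i}$ and the geometric condition that the apex $p$ avoids the chosen hyperplane. This is really just linear algebra, exploiting that the kernel of a symmetric form restricted to a hyperplane is precisely the hyperplane's intersection with the kernel of the ambient form, together with the fact that principal minors of a symmetric matrix are exactly the Gram determinants of such restrictions. Once this is in hand, the discriminant computation and the compatibility of hyperplane sections under projection from $p$ are formal.
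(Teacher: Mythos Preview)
Your proof is correct and essentially identical to the paper's: both restrict $\B_{\qq(H)}$ to the hyperplane $x_i = 0$, observe that this restriction is nondegenerate precisely because the one-dimensional kernel of $\B_{\qq(H)}$ is not contained in that hyperplane, and conclude that its Gram determinant $(\B_{\qq(H)})_{i,i}$ computes $\disc(\bar{\Q}_{\qq(H)})$ modulo squares. One small caveat: the general claim in your closing paragraph that ``the kernel of a symmetric form restricted to a hyperplane is precisely the hyperplane's intersection with the kernel of the ambient form'' is false as stated (restrict a nondegenerate hyperbolic plane to an isotropic line), though the specific equivalence you actually use holds in the present rank-$4$-in-dimension-$5$ setting and is all the argument needs.
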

\begin{proof}
Fix an $i$ such that $0\leq i\leq 4$.
On the open locus in $H$ where $\B_{i,i} \neq 0$, the matrix $\B$ has rank exactly $4$, and therefore has a $1$-dimensional kernel $V$ not contained in the hyperplane $x_i =0$.
Let $W$ be any codimension $1$ subspace that does not contain $V$. The restriction $\B|_W$ is therefore a nondegenerate bilinear form.  Up to isomorphism, this bilinear form is independent of the choice of $W$.  Therefore, for any $W$, the discriminant of $\B|_W$ is equal (modulo squares) to $\disc(\bar{\Q}_{\qq(H)})$.  The result follows by applying this to $W = (x_i =0)$.
\end{proof}

Geometrically, this argument shows that if $\B_{i,i}$ and $\B_{j,j}$ are nonzero, then the intersections of $\Q_{\qq(H)}$ with the hyperplanes $x_i = 0$ and $x_j = 0$, respectively, are isomorphic
smooth rank $4$ quadrics in $\pp^3$ over which $\Q_{\qq(H)}$ is a cone.

\medskip

The other projection $\pi_2: Z_P \to \gg(2, \pp^4)$ is also illuminating when studying points on $Y_P$.
Containing a fixed $2$-plane $\Lambda_0 \subset \pp^4$ is a linear condition on the quadrics in $P$, so the fibers of $\pi_2$ over each point $[\Lambda_0] \in \gg(2, \pp^4)$ are projective spaces.  Let $\Sigma_P \subset \gg(2, \pp^4)$ denote the image~$\pi_2(Z_P)$ and let $\mathcal{S}$ be the tautological subbundle on $\gg(2, \pp^4)$.
Then the subvariety $\Sigma_P$ can be viewed as a degeneracy locus, as we now explain.  The quadrics $Q_i$ give rise to sections $s_i$ of the rank $6$ vector bundle $\Sym^2 \mathcal{S}^\vee$ on $\gg(2, \pp^4)$ by restriction of $Q_i$ to each $[\Lambda] \in \gg(2, \pp^4)$.  If $[\Lambda] \in \gg(2, \pp^4)$ is in $\Sigma_P$, then there exists some point $(t_0: \cdots : t_4) \in P$ such that
\[\sum_{i=0}^4 t_i Q_i|_{\Lambda} = 0.\]
Therefore, we may equivalently define $\Sigma_P$ as the locus in $\gg(2, \pp^4)$ where $s_0, \dots, s_4$ fail to be linearly independent, and hence it is a degeneracy locus.  By \cite{3264}*{Theorem~5.3(b)}, the class of $\Sigma_P$ in $\CH^*(\gg(2, \pp^4))$ is $c_2( \Sym^2 \mathcal{S}^\vee)$. 

\begin{lem}\label{lem:evaluation_real}
Let $P$ be any $4$-plane in $\pp^{14}_\qq$ and let \(Y_P\) and \(Z_P\) be as defined in Section \ref{sec:Y}.  Then $Z_P(\rr)\neq \emptyset$; in particular, $Y_P(\rr) \neq \emptyset$.
\end{lem}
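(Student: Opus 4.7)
The plan is to study the second projection $\pi_2\colon Z_P \to \gg(2, \pp^4)$ and show that its image $\Sigma_P$ has a real point; lifting along $\pi_2$ and pushing forward along $Z_P \to Y_P$ will then yield the desired real point of $Y_P$. The lifting step is immediate: for any $[\Lambda] \in \Sigma_P(\rr)$, the fiber $\pi_2^{-1}([\Lambda])$ is cut out in $P \simeq \pp^4$ by the real linear conditions $Q_t|_\Lambda = 0$, so it is a nonempty projective linear subspace of $P$ defined over $\rr$ and therefore contains real points. Thus it suffices to prove $\Sigma_P(\rr) \neq \emptyset$.

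To this end, I would use the interpretation given in Section~\ref{sec:Y} of $\Sigma_P$ as the degeneracy locus of the map of real vector bundles $\phi\colon \O^5 \to \Sym^2 \mathcal{S}^\vee$ on $\gg(2, \pp^4)(\rr)$ determined by the basis $Q_0, \ldots, Q_4$. If $\Sigma_P(\rr)$ were empty, then $\phi$ would be everywhere injective over $\rr$, yielding a short exact sequence of real vector bundles
\[
0 \to \O^5 \to \Sym^2 \mathcal{S}^\vee \to L \to 0
\]
with $L$ a real line bundle. By the Whitney sum formula, this would force $w_i(\Sym^2 \mathcal{S}^\vee) = 0$ in $H^i\bigl(\gg(2, \pp^4)(\rr); \zz/2\bigr)$ for every $i \geq 2$.

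I would then derive a contradiction by computing $w_2(\Sym^2 \mathcal{S}^\vee)$ via the splitting principle: if $\mathcal{S}^\vee$ has mod-$2$ Stiefel--Whitney roots $x_1, x_2, x_3$, then $\Sym^2 \mathcal{S}^\vee$ has roots reducing mod $2$ to the multiset $\{0, 0, 0, x_1+x_2, x_1+x_3, x_2+x_3\}$, and a direct computation of the second elementary symmetric polynomial yields $w_2(\Sym^2 \mathcal{S}^\vee) = w_1(\mathcal{S}^\vee)^2 + w_2(\mathcal{S}^\vee)$. Invoking the Borel presentation of $H^*\bigl(\gg(2, \pp^4)(\rr); \zz/2\bigr)$ as a quotient of $\zz/2[w_1, w_2, w_3]$ by relations coming from the rank-$2$ quotient bundle (all of which lie in degree $\geq 3$), the classes $w_1(\mathcal{S}^\vee)^2$ and $w_2(\mathcal{S}^\vee)$ are linearly independent in $H^2$, so their sum is nonzero.

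The main obstacle is organizing the splitting-principle calculation correctly and invoking the presentation of the real Grassmannian's mod-$2$ cohomology in the right form; the rest of the argument is formal manipulation of the short exact sequence and Whitney's formula. A purely signature-theoretic alternative (tracking how the signature of $Q_t$ evolves along a real path from $t$ to $-t$ in $P(\rr)$ and forcing a $(2,2)$ rank-$4$ quadric to appear) seems to require assuming that $P$ contains a definite quadric, so the topological obstruction approach looks cleaner for the fully general statement.
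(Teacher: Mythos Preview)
Your proposal is correct and follows essentially the same approach as the paper: both reduce to showing $\Sigma_P(\rr)\neq\emptyset$ via the nonvanishing of $w_2(\Sym^2\mathcal{S}^\vee)=w_1(\mathcal{S}^\vee)^2+w_2(\mathcal{S}^\vee)$ in $H^2(\gg(2,\pp^4)(\rr);\zz/2)$, computed by the splitting principle and checked against the Borel presentation. The only cosmetic difference is that the paper cites a general result identifying the degeneracy locus class with $w_2$, whereas you argue by contradiction through the short exact sequence and Whitney's formula; these are equivalent packagings of the same obstruction.
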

\begin{proof}
We prove this lemma using topological arguments involving characteristic classes of real vector bundles; see \cite{hatcher}*{Section 3.3} for more details.
  Given a choice of five quadrics $Q_0, \dots, Q_4$ (defined over $\qq$) that span $P$, we may consider the construction described above over $\rr$.
Analogous to the second Chern class, the class of $(\Sigma_P)_\rr\subset\gg(2, \pp^4_{\rr})=:\gg_\rr$ in the cohomology $H^*(\gg_\rr, \zz/2\zz)$ is given by the second
Stiefel-Whitney class $w_2 (\Sym^2 \mathcal{S}_{\rr}^\vee)$ \cite{hatcher}*{Proposition 3.21}. 

A standard computation using the splitting principle shows 
$$w_2(\Sym^2 \mathcal{S}^\vee_\rr)  = w_1(\mathcal{S}^\vee_\rr)^2 + w_2(\mathcal{S}^\vee_\rr).$$
Finally, we observe that $H^2(\gg_\rr, \zz/2\zz)$ is freely generated over $\zz/2\zz$ by $w_1(\mathcal{S}^\vee_\rr)^2$ and $w_2(\mathcal{S}^\vee_\rr)$
\cite{milnorstasheff}*{Problem~6-B, Theorem~7.1}. 
Therefore, $[(\Sigma_P)_\rr]$ is a nontrivial class in $H^2(\gg_\rr, \zz/2\zz)$, and hence $(\Sigma_P)_{\rr}(\rr)$ cannot be empty.  Since the fibers of $\pi_2$ over each real point of $(\Sigma_P)_{\rr}$ are projective spaces, this implies that $Y(\rr)$ is nonempty as well.
\end{proof}

\subsection{\boldmath The Brauer class $\alpha$}
In this section, we begin by explicitly describing a quaternion algebra over the function field of $Y = Y_P$ that is the image of $\alpha = \alpha_P$ in $\Br(\qq(Y_P))$.  Recall that the conic bundle $Z = Z_P$ over $Y$ provides the nontrivial Brauer class $\alpha \in \Br(Y)$. We continue to fix a field $K$ not of characteristic $2$. 

\begin{lem}\label{lem:lines}
Let $Q \subseteq \pp_K^3$ be a smooth quadric with square discriminant (i.e., whose rulings are defined over $K$). Let $[\Gamma]$ in $(\pp^3)^\vee(K)$ be a hyperplane transverse to $Q$. Then the variety of lines on $Q$ in a given ruling is isomorphic to the conic $\Gamma \cap Q$. 
\end{lem}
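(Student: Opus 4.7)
The plan is to realize the claimed isomorphism as (the inverse of) the ``ruling projection'' $\rho \colon Q \to R$ restricted to $\Gamma \cap Q$, where $R$ denotes the variety of lines on $Q$ in the given ruling; set-theoretically the map will be $L \mapsto L \cap \Gamma$.

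First, I would construct $\rho$ via the universal family. Let $\U \subset R \times Q$ be the universal line of the chosen ruling, with projections $\pi_R \colon \U \to R$ (a $\pp^1$-bundle) and $\pi_Q \colon \U \to Q$. Since each point of $Q$ lies on exactly one line in the chosen ruling, $\pi_Q$ is bijective on $\bar K$-points, hence an isomorphism of smooth projective surfaces by Zariski's Main Theorem. Set $\rho \colonequals \pi_R \circ \pi_Q^{-1}$ and $\phi \colonequals \rho|_{\Gamma \cap Q} \colon \Gamma \cap Q \to R$. The entire construction descends to $K$ because the square-discriminant hypothesis guarantees that the chosen ruling is a $K$-subscheme of the Hilbert scheme of lines on $Q$, so that $\U$ is a $K$-subscheme of $R \times Q$.

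Next, I would verify that $\phi$ is an isomorphism by base change to $\bar K$. Fix an identification $Q_{\bar K} \cong \pp^1 \times \pp^1$ under which $\rho_{\bar K}$ becomes the first projection; then $R_{\bar K} \cong \pp^1$. Transversality of $\Gamma$ forces $(\Gamma \cap Q)_{\bar K}$ to be a smooth irreducible divisor of bidegree $(1,1)$ (the degenerate $(1,0)+(0,1)$ splitting is excluded), and such a divisor meets each fiber of the first projection in a single reduced point. Hence $\phi_{\bar K}$ is a degree-one morphism between smooth projective curves of genus zero, and is therefore an isomorphism; by faithfully flat descent, $\phi$ itself is an isomorphism over $K$.

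The main subtlety I anticipate is organizing the argument so that $R$, $\U$, and $\rho$ are defined over $K$, not merely over $\bar K$. This is exactly what the square-discriminant hypothesis supplies: the discriminant extension $K\bigl(\sqrt{\disc Q}\,\bigr)$ is the field of definition of the pair of rulings, so $\sqrt{\disc Q} \in K$ is precisely the condition that each individual ruling is itself a $K$-subscheme. Once this is arranged, the remainder reduces to the classical fact that a smooth $(1,1)$-divisor in $\pp^1 \times \pp^1$ is an isomorphic cover of either factor.
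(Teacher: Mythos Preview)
Your proof is correct and follows essentially the same idea as the paper's: both construct the correspondence sending a line in the ruling to its unique intersection point with $\Gamma \cap Q$, and conversely a point to the unique line of the ruling through it (the paper recovers this line via the tangent hyperplane section, you via the ruling projection). The paper's two-sentence argument simply records this bijection, whereas you package it more carefully through the universal family, Zariski's Main Theorem, and descent; the $K$-rationality concerns you address explicitly are left implicit in the paper.
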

\begin{proof}
Each line on $Q$ intersects $\Gamma \cap Q$ in a unique point.  Conversely,
the tangent hyperplane section of $Q$ at any point of $\Gamma \cap Q$ is the union of a unique line from each ruling.
\end{proof}

In particular, applying Lemma \ref{lem:lines} to the generic quadric, we have that the generic fiber $Z|_{\qq(Y)}$ of the conic bundle defining $\alpha$ is isomorphic to a transverse hyperplane section of $\bar{\Q}_{\qq(Y)}$.
We now make this more explicit in the generic situation, which will include our example in Theorem \ref{thm:example}. Recall that we write $\B = \B_P$ for the universal symmetric bilinear form over $P$. 

\begin{lem}\label{lem:explicit_Br_class}
Suppose that the principal minors $\B_{1234,1234}$, $\B_{234,234}$, $\B_{34, 34}$, and $\B_{4,4}$ are not identically zero on $H$.  Then the quaternion algebra
\[\left( \frac{\B_{234,234}}{(\B_{1234,1234})^2}, \frac{\B_{34,34}}{\B_{234,234}\B_{1234,1234}} \right) \in \Br\left(\qq(Y)\right)[2] \]
represents the restriction of $\alpha$.
\end{lem}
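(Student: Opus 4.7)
The plan is to identify $\alpha|_{\qq(Y)}$ explicitly by writing down the generic fiber of the conic bundle $Z \to Y$ and reading off its Brauer class. Since $Y$ is smooth, the restriction map $\Br(Y) \hookrightarrow \Br(\qq(Y))$ is injective, so determining $\alpha|_{\qq(Y)}$ will determine $\alpha$ itself.

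By construction, $\alpha|_{\qq(Y)}$ is the Brauer class of the generic fiber $Z_{\qq(Y)}$. Applying Lemma~\ref{lem:lines} over $\qq(Y) = \qq(H)\bigl(\sqrt{\B_{4,4}}\bigr)$, where the rulings of $\bar{\Q}_{\qq(Y)}$ are defined, identifies this generic fiber with the conic $\Gamma \cap \bar{\Q}_{\qq(Y)}$ for any hyperplane $\Gamma$ transverse to $\bar{\Q}_{\qq(Y)}$. The hypothesis $\B_{4,4} \neq 0$, combined with the proof of Lemma~\ref{lem:explicit_fun_field}, shows that $\bar{\Q}$ may be realized as the restriction of $\Q_{\qq(H)}$ to the hyperplane $(x_4 = 0)$, whose matrix is the $4\times 4$ principal submatrix of $\B$ on the coordinates $\{0,1,2,3\}$. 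Taking $\Gamma = (x_3 = 0) \subset \pp^3$, the hypothesis $\B_{34,34} \neq 0$ is exactly the condition that $\Gamma$ is transverse to $\bar{\Q}$, so the conic $\Gamma \cap \bar{\Q}$ lives in $\pp^2$ with bilinear form given by the $3 \times 3$ principal submatrix of $\B$ on coordinates $\{0,1,2\}$.

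Next I would diagonalize this ternary quadratic form. Its three leading principal minors are precisely $\B_{1234,1234}$, $\B_{234,234}$, and $\B_{34,34}$, all assumed nonzero; so iterated completion of squares (Gram--Schmidt with respect to the standard basis) applies over $\qq(H)$ and yields the diagonal form
\[
\B_{1234,1234}\, X^2 \;+\; \frac{\B_{234,234}}{\B_{1234,1234}}\, Y^2 \;+\; \frac{\B_{34,34}}{\B_{234,234}}\, Z^2 \;=\; 0.
\]
Rescaling the equation by $(\B_{1234,1234})^{-1}$ (which does not alter the projective conic) brings it into the form $X^2 + uY^2 + vZ^2 = 0$ with
\[
u \;=\; \frac{\B_{234,234}}{(\B_{1234,1234})^{2}}, \qquad v \;=\; \frac{\B_{34,34}}{\B_{234,234}\,\B_{1234,1234}},
\]
so that the quaternion algebra $(u,v)$ over $\qq(Y)$ represents the class of this conic, exactly as claimed.

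The main technical obstacle is the chain of identifications in the middle paragraph: verifying that Lemma~\ref{lem:lines} and Lemma~\ref{lem:explicit_fun_field} combine with the explicit matrix realization to produce the stated $3 \times 3$ submatrix of $\B$ as the bilinear form of the generic conic, and checking that the four nonvanishing hypotheses on principal minors are exactly what is required --- $\B_{4,4} \neq 0$ for the identification of $\bar{\Q}$, $\B_{34,34} \neq 0$ for transversality of $\Gamma$, and $\B_{1234,1234}, \B_{234,234} \neq 0$ for the applicability of Gram--Schmidt. Once these identifications are made, the diagonalization and rescaling are elementary.
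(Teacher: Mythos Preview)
Your proposal is correct and follows essentially the same approach as the paper: identify the generic fiber of $Z \to Y$ with a transverse hyperplane section of $\bar{\Q}$ via Lemma~\ref{lem:lines}, diagonalize by Gram--Schmidt using the nonvanishing of the leading principal minors, and read off the quaternion algebra. The only difference is the order of operations: you restrict twice (to $x_4=0$, then $x_3=0$) in the original coordinates and then diagonalize the resulting $3\times 3$ form, whereas the paper first diagonalizes the full rank-$4$ form on $\langle e_0,\dots,e_3\rangle$ (after explicitly exhibiting the kernel vector via Cramer's rule to justify nondegeneracy on this subspace) and then takes the first three diagonal entries as the conic; the outcome is identical.
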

\begin{proof}
To show this result, we will use our hypotheses on the minors of $\B$ to
make a nice choice of basis with which to diagonalize $\Q_{\qq(H)}$, yielding the quaternion algebra above.
  
Recall that $\B$ has rank 4 over $\qq(H)$.
  Because of our hypotheses on the minors of $\B$, the matrix in the linear system below is invertible, allowing us to use Cramer's rule to solve the system. The vector $v$ satisfying the linear system is in the kernel of the generic bilinear form $\B|_{\qq(H)}$.
Applying Cramer's rule gives us the entries of $v$ shown on the right.
  \[ \begin{pmatrix} &&&&&0 \\ &&&&&0 \\ &&\B &&&0 \\ &&&&&0 \\&&&&&1 \\ 0 & 0& 0&0&1&0  \end{pmatrix} \begin{pmatrix} \\ \\ v \\ \\ \\ \ast \end{pmatrix} = \begin{pmatrix} 0\\0 \\ 0 \\ 0 \\ \ast \\ \B_{4,4} \end{pmatrix},
    \quad\quad 
v = \begin{pmatrix} \B_{4,0}\\ - \B_{4,1}\\ \B_{4,2}\\ -\B_{4,3}\\ \B_{4,4} \end{pmatrix}.
\]
By hypothesis, the last entry of 
$v$ is nonzero, and so
$\B_{\qq(H)}$ is nondegenerate when restricted to the subspace generated by $e_0, e_1, e_2, e_3$.  We may therefore carry out Gram--Schmidt orthonormalization on this subspace to diagonalize $\Q_{\qq(H)}$.  
Since Gram--Schmidt orthonormalization only involves adding multiples of the previous columns, the diagonalized matrix must have the same leading principal minors: 
$\B_{1234,1234},\B_{234,234}/\B_{1234,1234}, \B_{34,34}/\B_{234,234}, \B_{4,4}/\B_{34,34},0$.

Over $\qq(Y)$, the conic $Z|_{\qq(Y)}$ is isomorphic to
\[ \left(\B_{1234,1234}\right)x_0^2 + \left( \frac{\B_{234,234}}{\B_{1234,1234}} \right) x_1^2 + \left( \frac{\B_{34,34}}{\B_{234,234}} \right) x_2^2 = 0,\]
which corresponds (see for instance \cite{KKS}*{Section~8.2}) to the
desired quaternion algebra.
\end{proof}

For the choice of five quadrics in Theorem \ref{thm:example}, we have that $\alpha$ is therefore represented by the quaternion algebra $(\frac{a_1}{a_2}, \frac{b_1}{b_2})$, where
\begin{equation}\label{explicit_alpha_brauer}
\begin{split}
a_1 = & -t_0^2 - 2t_0t_1 + 7t_1^2 + 8t_1t_2 + 48t_1t_3 + 16t_2t_3 + 64t_3^2 + 8t_1t_4 + 16t_3t_4, \\
a_2 = & 4t_1^2 + 8t_1t_2 + 4t_2^2 + 32t_1t_3 + 32t_2t_3 + 64t_3^2 + 8t_1t_4 + 8t_2t_4\\
&+ 32t_3t_4 + 4t_4^2, \\
b_1 =&  -10t_0^2t_1 - 14t_0t_1^2 + 38t_1^3 + 10t_0t_1t_2 + 36t_1^2t_2 + 4t_0t_2^2 - 18t_1t_2^2 - 6t_0^2t_3\\
&- 12t_0t_1t_3 + 442t_1^2t_3  
+ 96t_1t_2t_3 - 40t_2^2t_3 + 928t_1t_3^2 + 96t_2t_3^2 + 384t_3^3\\
&+ 20t_0t_1t_4 + 62t_1^2t_4 + 14t_0t_2t_4 - 30t_1t_2t_4 - 14t_2^2t_4
+ 112t_1t_3t_4\\
&- 80t_2t_3t_4 + 96t_3^2t_4 + 6t_0t_4^2 - 28t_1t_4^2 - 30t_2t_4^2 - 80t_3t_4^2 - 18t_4^3, \\
b_2 =& -2t_0^2t_1 - 4t_0t_1^2 + 14t_1^3 - 2t_0^2t_2 - 4t_0t_1t_2 + 30t_1^2t_2 + 16t_1t_2^2 - 8t_0^2t_3\\
&- 16t_0t_1t_3 + 152t_1^2t_3 + 192t_1t_2t_3 
+ 32t_2^2t_3 + 512t_1t_3^2 + 256t_2t_3^2 + 512t_3^3\\
&- 2t_0^2t_4 - 4t_0t_1t_4 + 30t_1^2t_4 + 32t_1t_2t_4 + 192t_1t_3t_4 + 64t_2t_3t_4 \\
& + 256t_3^2t_4 + 16t_1t_4^2 + 32t_3t_4^2 .
\end{split}
\end{equation}
\goodbreak

We conclude this section with the computation of the Brauer group of $Y$.  

\begin{prop}\label{prop:BrY}
Assume that $P$ is regular and write \(Y = Y_P\).  Then we have that 
\begin{enumerate}[{\rm (i)}]
\item $\Pic(Y_{\bar{\qq}}) \simeq \zz$.
\item $\Br_1(Y)/\Br(\qq) \simeq H^1(\Gal(\bar{\qq}/\qq), \Pic(Y_{\bar{\qq}})) = 0.$
\item $\Br(Y_{\bar{\qq}}) = \langle \alpha_{\bar{\qq}} \rangle \simeq \zz/2\zz$.
\end{enumerate}
\end{prop}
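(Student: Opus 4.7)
The plan is to treat each part in turn: (i) and (iii) reduce to corresponding statements over $\cc$, while (ii) follows from Hochschild--Serre once (i) is in hand.

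For (i), I would reduce to the corresponding statement over $\cc$ via invariance of the Picard group of a smooth projective variety under extension of algebraically closed base fields (in characteristic $0$), yielding $\Pic(Y_{\bar\qq}) \simeq \Pic(Y_\cc)$. Over $\cc$, $Y_\cc$ is a simply connected Calabi--Yau threefold by \cite{ht2}*{Proposition~4.3.4}, so $H^1(Y_\cc, \O) = 0$ (hence $\Pic^0(Y_\cc) = 0$) and $H^2(Y_\cc, \zz)$ is torsion-free by the universal coefficient theorem, making $\NS(Y_\cc) = \Pic(Y_\cc)$ torsion-free. Combined with the Hosono--Takagi Hodge-number computation $h^{1,1}(Y_\cc) = 1$, this gives $\Pic(Y_\cc) \simeq \zz$.

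For (ii), I would apply the Hochschild--Serre spectral sequence $H^p(G_\qq, H^q(Y_{\bar\qq}, \gg_m)) \Rightarrow H^{p+q}(Y, \gg_m)$. Using $H^0(Y_{\bar\qq}, \gg_m) = \bar\qq^*$ (since $Y_{\bar\qq}$ is proper and integral) and $H^1(G_\qq, \bar\qq^*) = 0$ by Hilbert~90, the relevant portion of the associated long exact sequence reads
\begin{equation*}
\Br(\qq) \to \Br_1(Y) \to H^1(G_\qq, \Pic(Y_{\bar\qq})) \to H^3(G_\qq, \bar\qq^*),
\end{equation*}
yielding an injection $\Br_1(Y)/\Br(\qq) \hookrightarrow H^1(G_\qq, \Pic(Y_{\bar\qq}))$. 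It then suffices to show the right-hand side vanishes. By (i), $\Pic(Y_{\bar\qq}) \simeq \zz$; fix an ample generator $L$ (which exists since $Y_{\bar\qq}$ is projective). For each $\sigma \in G_\qq$, the class $\sigma(L)$ is again ample and lies in $\zz\cdot L$, so $\sigma(L) = nL$ with $n > 0$, and since $\sigma$ acts as an automorphism, $n = 1$. Hence $G_\qq$ acts trivially on $\Pic(Y_{\bar\qq})$, and $H^1(G_\qq, \zz) = \Hom_{\mathrm{cts}}(G_\qq, \zz) = 0$ because $G_\qq$ is profinite and $\zz$ admits no nontrivial finite subgroups.

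For (iii), I would cite Proposition~\ref{prop:brauerX} of Addington's appendix directly, which establishes $\Br(Y_\cc) = \langle \alpha_\cc \rangle \simeq \zz/2\zz$. The Brauer group of a smooth proper variety over an algebraically closed field of characteristic $0$ is preserved by extension of the base field: apply smooth and proper base change in étale cohomology to $\mu_n$ to see $H^2(Y_{\bar\qq}, \mu_n) \simeq H^2(Y_\cc, \mu_n)$, take the direct limit in $n$, and use that the Brauer group of a smooth proper variety is torsion (by de~Jong/Gabber). This yields $\Br(Y_{\bar\qq}) \simeq \zz/2\zz$ with generator $\alpha_{\bar\qq}$. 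The main obstacle in this plan is pinning down a precise reference for $h^{1,1}(Y_\cc) = 1$ in the Hosono--Takagi papers; in the worst case one can extract this from the cohomological input Addington uses in the appendix to compute $\Br(Y_\cc)$.
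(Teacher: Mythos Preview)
Your approach mirrors the paper's: reduce (i) and (iii) to statements over $\cc$ via base-change invariance, and deduce (ii) from Hochschild--Serre and (i).  Two points are worth flagging.

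First, the blanket claim in (i) that $\Pic$ is invariant under extension of algebraically closed fields is false in general: for an elliptic curve $E$, $\Pic^0(E_{\bar\qq}) = E(\bar\qq)$ is countable while $\Pic^0(E_\cc)$ is not.  What is invariant is the N\'eron--Severi group (see \cite{maulikpoonen}*{Proposition~3.1}); one must separately argue that $\Pic^0(Y_{\bar\qq})$ vanishes.  You establish $H^1(\O_{Y_\cc}) = 0$, and cohomology and base change then gives $H^1(\O_{Y_{\bar\qq}}) = 0$, hence $\Pic^0(Y_{\bar\qq}) = 0$; at that point $\Pic(Y_{\bar\qq}) = \NS(Y_{\bar\qq}) \simeq \NS(Y_\cc) = \Pic(Y_\cc)$.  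This is exactly how the paper proceeds, citing \cite{ht2}*{Proposition~4.2.4} directly for Picard rank $1$ and triviality of $\Pic^0$ over $\cc$ (which resolves your worry about a reference for $h^{1,1}=1$).

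Second, your treatment of (ii) is slightly more careful than the paper's: you explicitly argue that the Galois action on $\Pic(Y_{\bar\qq}) \simeq \zz$ is trivial (via preservation of ampleness), whereas the paper simply writes $H^1(\Gal(\bar\qq/\qq),\zz) = 0$.  The paper also invokes the vanishing $H^3(\qq,\gg_m) = 0$ to upgrade your injection to an isomorphism $\Br_1(Y)/\Br(\qq) \simeq H^1(G_\qq,\Pic(Y_{\bar\qq}))$, matching the statement as written; for the conclusion that both sides vanish, your injection suffices.  For (iii) the paper cites \cite{cts}*{Propositions~5.2.2 and 5.2.3} for $\Br(Y_{\bar\qq}) \simeq \Br(Y_\cc)$, which packages the smooth-proper base change argument you outline.
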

\begin{proof}\hfill
\begin{enumerate}[(i)] 
\item  By \cite{ht2}*{Proposition 4.2.4}, $Y/\cc$  has Picard rank $1$ and $\Pic^0(Y_\cc)$ is trivial.
The natural map $H^1(\O_{Y_{\bar{\qq}}}) \otimes \cc \to H^1(\O_{Y_{\cc}})$ is an isomorphism by cohomology and base change, hence
$\Pic^0(Y_{\bar{\qq}})$ is also trivial. Thus the statement follows from the fact that the N\'eron-Severi group of a smooth proper variety over an algebraically closed field remains the same after base change to another algebraically closed field \cite{maulikpoonen}*{Proposition~3.1}.
\item The identification $\Br_1(Y)/\Br(\qq) \simeq H^1(\Gal(\bar{\qq}/\qq), \Pic(Y_{\bar{\qq}}))$ follows from the low degree exact sequence of the Hochschild-Serre spectral sequence
 \cite{poonen_2017}*{Corollary~6.7.8} since $H^3(\qq, \gg_m) = 0$ \cite{poonen_2017}*{Remark~6.7.9}.  Since $\Pic(Y_{\bar{\qq}}) = \zz$ and $H^1(\Gal(\bar{\qq}/\qq), \zz) = 0$, the result follows.
\item The natural map of Brauer groups $\Br(Y_{\bar{\qq}})\to \Br(Y_{\cc})$ is an isomorphism by \cite{cts}*{Propositions~5.2.2 and 5.2.3}. 
By Proposition \ref{prop:brauerX} in Appendix \ref{appendix}, $\Br(Y/\cc)\simeq \zz/2\zz$ and is generated by the base change of $\alpha$ to $\cc$, so the result follows.
\qedhere
\end{enumerate}
\end{proof}

\section{An obstruction to weak approximation} \label{sec:obstr_weak}
\subsection{Background}\label{sec:weak_approx}

Recall that a smooth projective variety $Y$ over $\qq$ satisfies \defi{weak approximation} if the $\qq$-rational points are dense inside of the adelic points in the adelic topology. We recall how elements of the Brauer group of $Y$ may obstruct weak approximation. 

Let $L \supset \qq$ be a field.  A point $y \in Y(L)$ gives a map $y \colon \Spec L \to Y$.
By functoriality of Brauer groups, we can pull back an Azumaya algebra $\A$ under $y$ to obtain
\[\A(y) \colonequals y^* \A \in \Br(\Spec L) = \Br(L). \]
We call this pullback \defi{evaluation of $\A$ at $y$}.  When $L = \qq_v$ is the completion of $\qq$ at a place $v$, we may further compose with the invariant map
\[\inv_v \colon \Br(\qq_v) \to \qq/\zz.\]
If $v$ is a finite place, then $\inv_v$ is an isomorphism onto $\qq/\zz$.  For $\infty$ the real infinite place of $\qq$, we have $\inv_\infty \colon \Br(\rr) \xrightarrow{\simeq} (\frac{1}{2}\zz)/\zz \subset \qq/\zz$.
Recall that the fundamental exact sequence of global class field theory \cite{milne_2003}*{Chapter~VIII, Theorem~4.2}
\[0 \to \Br(\qq) \to \bigoplus_v \Br(\qq_v) \xrightarrow{\sum \inv_v} \qq/\zz \to 0\]
says that the tuples of local Brauer classes that come from $\Br(\qq)$ are exactly those whose invariants sum to $0$. For any subset $S\subseteq \Br(Y)$ we can produce an intermediate set of adelic points containing the closure of the $\qq$-rational points 
\begin{equation} \overline{Y(\qq)} \subseteq Y(\aa_\qq)^{S} \subseteq Y(\aa_\qq),\end{equation} 
defined by
\[ Y(\aa_\qq)^S := \Bigl\{ (P_v) \in Y(\aa_\qq) : \sum_v \inv_v \mathcal{A}(P_v) = 0, \text{ for all } \mathcal{A} \in S \Bigr\}.\]
For more details on these inclusions, which arise from class field theory, see \cite{skorobogatov_2001}.

We will produce obstructions to weak approximation by means of the following standard argument.

\begin{lem}\label{lem:real_obstruction}
Let $\A \in \Br(Y)[2]$ and let $v$ be a place of $\qq$.  Suppose that $Y(\aa_\qq) \neq \emptyset$  and that there exist two $\qq_v$ points $Q_v, Q_v' \in Y(\qq_v)$ for which
\[ \inv_v(\A(Q_v)) = 0 \in \Br(\qq_v) \qquad \text{and} \qquad \inv_v(\A(Q_v'))  = \tfrac{1}{2} \in \Br(\qq_v). \] 
Then $Y(\aa_\qq)^\A$ is not equal to $Y(\aa_\qq)$.  In particular, $\A$ obstructs weak approximation on $Y$.
\end{lem}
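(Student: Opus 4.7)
The plan is to directly produce an adelic point that fails to lie in $Y(\aa_\qq)^\A$. Starting from any adelic point $(P_w)_w \in Y(\aa_\qq)$, which exists by hypothesis, I would form two modifications $(R_w)$ and $(R_w')$ by replacing only the $v$-component: set $R_v := Q_v$, $R_v' := Q_v'$, and leave all other coordinates equal to $P_w$. Changing a single local coordinate of an adelic point still yields an adelic point, so these constructions are legitimate elements of $Y(\aa_\qq)$.

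Next I would compare the two global sums $\sum_w \inv_w(\A(R_w))$ and $\sum_w \inv_w(\A(R_w'))$. Both sums are finite (hence well-defined in $\qq/\zz$) by the standard fact that $\A$ extends to an Azumaya algebra over a smooth integral model away from finitely many primes, so at a prime $p$ of good reduction evaluation lies in $\Br(\zz_p) = 0$. Since the two adelic points agree outside of $v$, the difference of their global invariant sums is exactly $\inv_v(\A(Q_v')) - \inv_v(\A(Q_v)) = \tfrac{1}{2} - 0 = \tfrac{1}{2}$, which is nonzero in $\qq/\zz$.

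The conclusion follows immediately: since the two sums differ by a nonzero element of $\qq/\zz$, they cannot both vanish, so whichever of $(R_w)$ and $(R_w')$ has nonzero invariant sum lies in $Y(\aa_\qq) \setminus Y(\aa_\qq)^\A$, establishing the proper containment and hence the obstruction to weak approximation. I do not anticipate any serious obstacle in this argument: the two hypotheses (nonemptiness of $Y(\aa_\qq)$ and the existence of local points $Q_v, Q_v'$ at $v$ with distinct invariants) supply all the essential content, and the rest is a formal manipulation with the definition of $Y(\aa_\qq)^\A$ and the invariant map.
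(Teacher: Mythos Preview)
Your proposal is correct and follows essentially the same approach as the paper: start from an adelic point, modify only the $v$-component using $Q_v$ or $Q_v'$, and observe that the resulting global invariant sums differ by $\tfrac{1}{2}$, so at least one of the modified adelic points lies outside $Y(\aa_\qq)^\A$. The paper's version uses the $2$-torsion hypothesis to pin down in advance which of $Q_v$ or $Q_v'$ to insert, whereas you argue slightly more abstractly that the two sums cannot both vanish; this is a cosmetic difference, not a substantive one.
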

\begin{proof}
Since $Y(\aa_\qq) \neq \emptyset$, there exist local points $P_\ell \in Y(\qq_\ell)$ for all places $\ell \neq v$.  Since $\A \in \Br(Y)[2]$, \[\sum_{\ell \neq v} \inv_\ell(\A(P_\ell )) \in \frac{1}{2} \zz / \zz.\]
We use these local points to define an adelic point $(P_\ell) \in Y(\aa_\qq)$ where $P_v \in Y(\qq_v)$ is defined by 
\[P_v \colonequals \begin{cases}  Q_v' &\text{if } \ \sum_{\ell \neq v} \inv_\ell(\A(P_\ell )) = 0, \\
Q_v &\text{if } \ \sum_{\ell \neq v} \inv_\ell (\A(P_\ell)) = \frac{1}{2}. \end{cases} \]
By construction, $(P_\ell) \in Y(\aa_\qq) \smallsetminus Y(\aa_\qq)^\A$,  so $\overline{Y(\qq)} \subset Y(\aa_\qq)^\A \subsetneq Y(\aa_\qq)$, and $Y$ fails to satisfy weak approximation.
\end{proof}

\subsection{\boldmath Evaluation of the Brauer class $\alpha$}
\label{expectedbehavior}

We now specialize to the setup of our Calabi--Yau threefold $Y = Y_P$ and $2$-torsion Brauer class $\alpha = \alpha_P \in \Br(Y)[2]$ that are associated to a choice of linear system $P$ of five quadrics in $\pp_{\qq}^4$.  We will always assume that we are working with a regular \(P\) so that \(Y_P\) is smooth.

For any extension $L \supset \qq$, we begin by geometrically describing the evaluation of $\alpha$ at a point $y \in Y(L)$.  By slight abuse of notation, we will write $Q_y$ for the quadric corresponding to the image of $y$ in $H$.  Since $\alpha$ is the Brauer class associated to an \'etale $\pp^1$-bundle $Z$,  the evaluation $\alpha(y)$ is the class of the conic $Z|_y$ over $\Spec L$ in $\Br(L)[2]$.
Therefore 
\begin{align*}
\text{$\alpha(y) = 0 \in \Br(L)[2]$ } &\Leftrightarrow \text{ $Z|_y$ has an $L$-point}\\
&\Leftrightarrow \text{ $Q_{y}$ contains an $L$-rational plane $\Lambda \simeq \pp^2_L$ in ruling $y$.}
\end{align*}

Thus for $y \in Y(\qq_v)$,
\begin{equation}\label{inv_calculation} \inv_v(\alpha(y)) = \begin{cases} 0 
& \text{if $Q_{y}$ contains a $\qq_v$-rational 2-plane in ruling $y$,} \\
\frac{1}{2} & \text{otherwise.}
\end{cases}\end{equation}
We give an equivalent criterion for $\inv_v(\alpha(y))$ to be trivial.

\begin{lem}\label{lem:trivial_evaluation_criterion}
Let $y\in Y(L)$ be such that $Q_{y}$ is of rank at least $3$.  Then $Q_{y}$ contains an $L$-rational $2$-plane in ruling $y$ if and only if $Q_{y}$ has a smooth $L$-point.
\end{lem}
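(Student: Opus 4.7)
The forward direction is straightforward: an $L$-rational $2$-plane $\Lambda \subseteq Q_y$ necessarily contains the singular locus $V$ of $Q_y$ (since every $2$-plane on the quadric cone $Q_y$ passes through its vertex), and since $L$ is infinite, $\Lambda(L) \setminus V(L)$ is nonempty, producing a smooth $L$-point of $Q_y$.

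For the reverse direction, the plan is to project from the singular locus $V$ of $Q_y$ and reduce to the situation of Lemma \ref{lem:lines}.  First I would check that $V$ is $L$-rational as a scheme: in the rank $4$ case it is a reduced $L$-rational point, and in the rank $3$ case it is a line isomorphic to $\pp^1_L$.  Second, projection $\pi_V$ away from $V$ realizes $Q_y \setminus V$ as a $\pp^{\dim V + 1}$-bundle over either a smooth rank-$4$ quadric $\bar{Q} \subseteq \pp^3_L$ (when $Q_y$ has rank $4$) or a smooth conic $C \subseteq \pp^2_L$ (when $Q_y$ has rank $3$), with everything defined over $L$.  The $2$-planes on $Q_y$ in a given ruling correspond bijectively to lines of $\bar{Q}$ in the matching ruling in the rank-$4$ case, and to points of $C$ in the rank-$3$ case.

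Given a smooth $L$-point $p$ of $Q_y$, we have $p \notin V$, so its projection $\bar{p} \colonequals \pi_V(p)$ is an $L$-point of $\bar{Q}$ or $C$.  In the rank-$3$ case, the linear join of $V$ and $\bar{p}$ is an $L$-rational $2$-plane on $Q_y$ lying in the unique ruling $y$, and we are done.  In the rank-$4$ case, the key input is that, since $y \in Y(L)$, the rulings of $\bar{Q}$ are defined over $L$ by \eqref{eq_disc}.  The tangent hyperplane $T_{\bar{p}} \bar{Q}$ meets $\bar{Q}$ in two lines through $\bar{p}$, one in each ruling; these lines are defined over at worst a quadratic extension of $L$, but since Galois preserves each ruling and the two lines lie in distinct rulings, each must itself be $L$-rational.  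Taking the cone over the line in ruling $y$ with vertex $V$ then produces an $L$-rational $2$-plane on $Q_y$ in ruling $y$.

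The main obstacle I anticipate is the final step: carefully verifying that the bijection between rulings on $Q_y$ and rulings on $\bar{Q}$ is $L$-equivariant, so that the $L$-rational line on $\bar{Q}$ in ruling $y$ indeed produces a $2$-plane in the specified ruling $y$ on $Q_y$.  This comes down to unwinding how $y \in Y(L)$ identifies a ruling via the discriminant description \eqref{eq_disc} and checking compatibility with the Galois action.
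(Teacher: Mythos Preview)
Your proposal is correct and follows essentially the same route as the paper: both argue the forward direction by noting that a $2$-plane cannot be contained in the (at most $1$-dimensional) singular locus, and both handle the reverse direction by projecting from the vertex to $\bar{Q}$ or $C$, then in rank $4$ taking the tangent hyperplane section at the image point to produce two lines, one per ruling, each individually $L$-rational because $y\in Y(L)$ forces the rulings to be Galois-stable. The ``obstacle'' you flag is not a real issue: the bijection between rulings of $Q_y$ and of $\bar{Q}$ is the tautological one induced by the cone construction, so $L$-equivariance is automatic.
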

\begin{proof}
To see the forward direction, we note that if $Q_{y}$ contains an $L$-rational plane $\Lambda \simeq \pp^2_L$, then as the dimension of the singular locus of $Q_{y}$ is at most~$1$, $\Lambda$ is generically contained in the smooth locus of $Q_{y}$, which hence has a smooth $L$-point.

In the other direction, first assume that $Q_{y}$ has rank $3$. Then it is a cone (with ``vertex'' a line $\ell$ defined over $L$) over a smooth conic in $\pp_L^2$.  If $Q_{y}$ has a smooth $L$-point, then this conic has an $L$-point, and hence the span of that point and the line $\ell$ is an $L$-rational plane $\Lambda$ in $Q_{y}$ (which has only one ruling).

Similarly, if $Q_{y}$ has rank $4$, then it is a cone (with vertex a point $x \in \pp^4(L)$) over a smooth quadric $\bar{Q}$ in $\pp_L^3$.  If $Q_{y}$ has a smooth $L$-point, then this quadric $\bar{Q}$ has a $L$-point $p$.  The tangent plane section $T_p\bar{Q} \cap \bar{Q}$ consists of two lines, one from each ruling.  As $y$ was an $L$-point of $Y$, these lines are individually defined over $L$, and hence the span of the one in ruling $y$ and $x$ gives the desired plane $\Lambda$ in $Q_{y}$ in ruling $y$.
\end{proof}

Say that a pair $(Y_P, \alpha_P)$ has \defi{trivial evaluation at all finite places} if
\begin{equation}\label{eq:triv_eval}
\text{for all $p$ and all $y \in Y_P(\qq_p)$, we have $\alpha_P(y) = 0.$}\tag{$\dagger$}
\end{equation}
We next show that \eqref{eq:triv_eval} is the ``expected" behavior as we vary the defining linear system \(P\).  
We will guarantee this, using Lemma \ref{lem:trivial_evaluation_criterion} and Hensel's Lemma, by spreading out our family of quadrics to $\Spec \zz$ and showing that over $\ff_p$, all quadrics have smooth $\ff_p$-points. Extra care is necessary when $p=2$ since quadrics and bilinear forms do not coincide in characteristic $2$.

Let  $\sB$ be the universal $5\times 5$ symmetric matrix with even diagonal entries
over $\aa_{\zz}^{15}$.  We say an ideal $I$ of a ring $R$ is \defi{saturated with respect to $s \in R$} if whenever there exists $n  \geq 1$ such that $s^n x \in I$, then $x \in I$. Given an ideal $I$ of a ring $R$ we can produce a new ideal $I'$ that is saturated with respect to $s$ in $R$ through the process of \defi{saturation}: $I' \colonequals \{ x  \in R : s^n x \in I \text{ for some } n \geq 1\}$.
We consider the ideal of $3 \times 3$ minors of $\sB$ and saturate with respect to the element $s = 2$. Define $\I_3$ to be the resulting ideal of $3\times 3$ minors of $\sB$ that is \emph{saturated with respect to $2$}.  Since this is independent of scaling, it descends to the projective space $\pp^{14}$ of quadric hypersurfaces in $\pp^4$.  Let $\V_3 = V(\I_3)$ be the associated subscheme of $\pp_{\zz}^{14}$.  

We can also describe the process of saturation geometrically.  Let \(\V_3^\circ \subset \pp_{\zz[1/2]}^{14}\) be the subscheme of quadrics of rank at most \(2\); the ideal of \(\V_3^\circ\) is given by the \(3 \times 3 \) minors of \(\sB\).  Then the saturation of this ideal at \(2\) is the ideal of the closure \(\V_3 = \overline{\V_3^\circ}\) in \(\pp^{14}_{\zz}\).

\begin{lem}\label{lem:sm_pt_criterion}
Suppose that $Q \subseteq \pp^4_{\ff_q}$ is a quadric whose associated bilinear form $B_Q$ does not lie in $\V_3(\ff_q)$.  Then $Q$ has a smooth $\ff_q$-point.
\end{lem}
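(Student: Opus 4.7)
The approach is to split on $\operatorname{char}(\ff_q)$, exploiting the fact that the passage from the quadratic form $Q$ to its polar bilinear form $B_Q$ is reversible precisely when $2$ is invertible.

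\emph{Odd characteristic.} When $\operatorname{char}(\ff_q)$ is odd, $2$ is a unit in $\ff_q$ and the saturation defining $\I_3$ becomes vacuous modulo the characteristic. Thus $B_Q \notin \V_3(\ff_q)$ is equivalent to $B_Q$ having rank at least $3$, which in odd characteristic is the same as $Q$ having rank at least $3$ as a quadratic form. Then I would write $Q$ as a cone over a smooth quadric $\bar Q \subset \pp^n_{\ff_q}$ with $n = \operatorname{rank}(Q) - 1 \geq 2$. Since $\Br(\ff_q) = 0$ (or directly by Chevalley--Warning applied to a degree-$2$ form in $\geq 3$ variables), $\bar Q(\ff_q) \neq \emptyset$, and any such point lifts along the cone, away from the vertex, to a smooth $\ff_q$-point of $Q$.

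\emph{Characteristic $2$.} Here the saturation is essential. The plan is to first identify $\V_3 \otimes \ff_2$ set-theoretically as the image of the ``product of linear forms'' morphism $\Sym^2 \pp^4 \to \pp^{14}$, $(L_1, L_2) \mapsto L_1 L_2$; dimensions match ($8$-dimensional in each fiber), which is consistent with flatness of $\V_3 \to \Spec \zz$ forced by the saturation. Under this identification, $B_Q \notin \V_3(\ff_q)$ becomes equivalent to $V(Q)$ being geometrically irreducible as a hypersurface in $\pp^4_{\ff_q}$. For such a geometrically irreducible quadric threefold, the singular locus is a linear subspace (cut out by the partial derivatives of $Q$) of projective dimension at most $1$, since a larger singular locus would force $V(Q)$ to decompose into two hyperplanes. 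Hence the singular locus has at most $q+1$ rational points, while a direct point count across the three possible types of geometrically irreducible quadric threefolds (smooth; cone over a smooth quadric surface with vertex a point; cone over a smooth conic with vertex a line) gives $|V(Q)(\ff_q)| \geq q^3 + q + 1 > q + 1$. So $V(Q)$ has $\ff_q$-points outside its singular locus, which are precisely the desired smooth $\ff_q$-points.

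The hardest step will be the identification of $\V_3 \otimes \ff_2$ with the image of the multiplication morphism: the inclusion $\supseteq$ is clear from the definition of $\V_3$ as a closure, but the reverse inclusion requires verifying that the saturation does not introduce further equations in characteristic $2$. This can be accomplished either by exhibiting enough explicit elements of $\I_3 \otimes \ff_2$, or by a flatness/dimension argument showing that the $8$-dimensional image in characteristic $2$ must exhaust $\V_3 \otimes \ff_2$.
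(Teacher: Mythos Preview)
Your odd-characteristic argument is essentially the paper's: both reduce to a smooth quadric in at least three variables and invoke Chevalley--Warning.

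In characteristic $2$ your route is genuinely different from the paper's.  The paper puts $Q$ into Albert's normal form $ax_0^2+\cdots+ex_4^2+fx_0x_1+gx_2x_3$, explicitly computes the image of $\I_3$ under this specialization modulo $2$ (obtaining generators like $f^2c$, $f^2g$, $g^2a$, \dots), and then runs a short case analysis: for each generator that could be nonzero, it exhibits a specific $2$-plane or hyperplane section that is a smooth conic or quadric surface with an $\ff_q$-point.  Your approach instead identifies $\V_3(\ff_q)$ with the geometrically reducible quadrics, and then for an irreducible $Q$ compares $|V(Q)(\ff_q)|$ to the size of the (linear, low-dimensional) singular locus.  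Your argument is more conceptual and uniform; the paper's is more hands-on but entirely elementary once the generators of $\bar I$ are written down.

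Two remarks on your write-up.  First, your ``hardest step'' is not actually needed.  For the lemma you only use the implication $[Q]\notin\V_3(\ff_q)\Rightarrow Q$ geometrically irreducible, whose contrapositive is exactly the inclusion $W_{\ff_2}\subseteq\V_3\otimes\ff_2$ you already call ``clear'' (lift a factorization to characteristic zero, land in $\V_3^\circ$, and specialize back by closedness).  The reverse inclusion $\V_3\otimes\ff_2\subseteq W_{\ff_2}$ is also true---both $W$ and $\V_3$ are the Zariski closure in $\pp^{14}_\zz$ of the same irreducible $\V_3^\circ$ over $\zz[1/2]$---but you never use it.  Second, a small imprecision: in characteristic $2$ the partial derivatives of $Q$ cut out $\ker B_Q$, not the singular locus of $V(Q)$; the singular locus is $\ker B_Q\cap V(Q)$.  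This is still a linear subspace, since $Q$ restricted to $\ker B_Q$ is additive ($Q(x+y)=Q(x)+Q(y)$ there), so your dimension bound and the subsequent point count go through unchanged.
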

\begin{proof}
If $q$ is not a power of $2$, then we may assume that $Q = ax_0^2 + bx_1^2 + cx_2^2 + dx_3^2 + ex_4^2$ is diagonal.  Since $[B_Q] \notin \V_3(\ff_q)$, we may assume that $a$, $b$, and $c$ are nonzero.  The $2$-plane section $x_3=x_4=0$ of $Q$ is therefore a smooth conic, which has smooth $\ff_q$-points by the Chevalley--Warning Theorem. These points are smooth points of $Q$. 

Now, we let $q = 2^r$.  By \cite{albert}*{Theorem~16}, every quadric in $\pp_{\ff_q}^4$ is equivalent to one of the form
\[Q_0 = ax_0^2 + bx_1^2 + cx_2^2 + dx_3^2 + ex_4^2 + f x_0x_1 + gx_2x_3. \]

Working over \(\zz\), this formula defines a map from the parameter space \(\aa_\zz^{15}\) of quadrics (or symmetric bilinear forms with even diagonal entries) to \(\aa_\zz^7\) with variables \(a, b, c, d, e, f, g\).  Let \(I \subseteq \zz[a,b,c,d,e,f,g]\) denote the image of the ideal \(\I_3\) under this specialization.  (Since pullback and Zariski closure do not commute, this is \emph{not} the same as the ideal of $3 \times 3$ minors of $B_{Q_0}$ saturated at $2$.)

We now specialize to characteristic \(2\).  Write $\bar{I} = I \mod 2$.  Then one can compute that explicitly that \(\bar{I}\) has generators
\[\bar{I} = (f^2c, f^2d, f^2e, f^2g, g^2a, g^2b, g^2e, g^2f).\]
It therefore suffices to show that if $a, b, \ldots, g \in \ff_q$ is a specialization such that one of the above generators of $\bar{I}$ is nonzero in $\ff_q$, then that associated quadric has a smooth $\ff_q$-point. 
We consider the generators in turn below.
\begin{enumerate}[{{Case}}~1.]
\item Generators guaranteeing both $f,g \in \ff_q^\times$ (i.e., generators $f^2g$, $g^2f$).  In this case, the hyperplane section $x_4=0$ is a smooth quadric surface, and hence by Chevalley-Warning, has smooth $\ff_q$-points.
\item Generators guaranteeing both $e,f \in \ff_q^\times$ or both $e,g \in \ff_q^\times$ (i.e., generators $f^2e$, $g^2e$).  In this case, the $2$-plane section $x_2=x_3=0$ (resp.~$x_0=x_1=0$) is a smooth conic, and hence by Chevalley-Warning, has smooth $\ff_q$-points.
\item Generators guaranteeing both $f$ and $c$ or $d \in \ff_q^\times$ or both $g$ and $a$ or $b$ $\in \ff_q^\times$ (i.e., generators $f^2c$, $f^2d$, $g^2a$, $g^2b$).  In this case, the $2$-plane section $x_3=x_4=0$ or $x_2=x_4=0$ (resp.~$x_1=x_4=0$ or $x_0=x_4=0$) is a smooth conic, and hence by Chevalley-Warning, has smooth $\ff_q$-points. \qedhere
\end{enumerate}
\end{proof}

The key technical ingredient in understanding the density of planes satisfying \eqref{eq:triv_eval} is the \defi{Ekedahl sieve}.  Recall that the height of an integral point on \(\aa^n\) is the maximum of the absolute value of the coordinates:
\[h((p_1, \dots, p_n)) = \max_i |p_i|.\]
For compactness of notation we will write \(\aa^n(\zz)_{\leq N}\) for the set of integral points on \(\aa^n\) of height at most \(N\).

\begin{lem}[The Ekedahl sieve]\label{lem:eke}
Let \(Z \subset \aa^n\) be a subscheme of codimension at least \(2\).  Then we have
\[ \lim_{M \to \infty} \lim_{N \to \infty} \frac{\#\{P \in \aa^n(\zz)_{\leq N} :  \exists p > M \text{ s.t. } (P \mod p) \in Z(\ff_p)\}}{\#\aa^n(\zz)_{\leq N}} =  0.\]
\end{lem}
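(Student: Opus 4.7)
The plan is to estimate the numerator by splitting primes $p > M$ into a ``small'' range $M < p \leq 2N$ and a ``large'' range $p > 2N$, handling each with a different technique.  After a union bound over the finitely many irreducible components of $Z$, I may assume $Z$ is irreducible of dimension $d \leq n-2$; the Lang--Weil estimate then yields $\#Z(\ff_p) \leq C\, p^{d} \leq C\, p^{n-2}$ uniformly in $p$, and counting fibers of the reduction map $\aa^n(\zz)_{\leq N} \to \aa^n(\ff_p)$ gives
\[
\#\{P \in \aa^n(\zz)_{\leq N} : P \bmod p \in Z(\ff_p)\} \;\leq\; \#Z(\ff_p) \cdot (2\lfloor N/p \rfloor + 1)^n.
\]

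For the small-prime range $M < p \leq 2N$, this evaluates to $O(p^{n-2}(N/p)^n) = O(N^n/p^2)$.  Union-bounding over $p$ and dividing by $\#\aa^n(\zz)_{\leq N} \asymp N^n$ contributes $O\bigl(\sum_{p > M} p^{-2}\bigr) = O(1/M)$, which tends to $0$ as $M \to \infty$.

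The core of the argument is the large-prime range $p > 2N$, where the reduction map is injective on $\aa^n(\zz)_{\leq N}$ but $\sum_p \#Z(\ff_p)$ diverges, so the union bound alone is insufficient.  Here I would exploit $\operatorname{codim}(Z) \geq 2$ to select two polynomials $f, g$ in the defining ideal of $Z$ that are coprime in $\qq[x_1, \ldots, x_n]$: since every irreducible factor of any nonzero $f \in I(Z)$ cuts out a hypersurface that cannot contain $Z$ (a codimension $\geq 2$ subvariety), prime avoidance yields a $g \in I(Z)$ sharing no factor with $f$.  After a generic linear change of coordinates making both polynomials have positive degree in $x_n$, the resultant $R \colonequals \Res_{x_n}(f, g) \in \zz[x_1, \ldots, x_{n-1}]$ is a nonzero element of the ideal $(f, g)$.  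Hence any prime $p$ with $P \bmod p \in Z(\ff_p)$ divides $R(P_1, \ldots, P_{n-1})$.  If this value is zero, then $(P_1, \ldots, P_{n-1})$ is constrained to a hypersurface in $\aa^{n-1}$, giving only $O(N^{n-1})$ points after the free choice of $P_n$; if it is nonzero, then $p$ divides a bounded nonzero integer, and the univariate polynomial $h(x) \colonequals f(P_1, \ldots, P_{n-1}, x)$ has at most $\deg_{x_n} f$ roots modulo each of the $O(1)$ relevant primes, bounding the admissible values of $P_n$ (the degenerate locus $\{h \equiv 0\}$ is itself a proper subvariety of $\aa^{n-1}$, again contributing $O(N^{n-1})$).

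Combining both ranges yields an upper bound $O(N^n/M) + o(N^n)$ on the numerator, and dividing by $\#\aa^n(\zz)_{\leq N}$ and sending $N \to \infty$ then $M \to \infty$ gives the claim.  The principal obstacle is the large-prime regime: the codimension-$\geq 2$ hypothesis enters crucially, both to produce coprime elements of the ideal and to ensure that the resulting resultant contribution is $o(N^n)$; without it, the sieve breaks down.
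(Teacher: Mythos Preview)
Your argument is correct and is precisely the content of \cite{poonen_stoll}*{Lemma~21}, which is what the paper invokes after noting that $Z$ may be enlarged to a codimension-$2$ complete intersection $V(f,g)$ (with a nod to \cite{ekedahl}*{Theorem~1.2}).  Two small fixes: prime avoidance requires that no hypersurface $V(f_i)$ be \emph{contained in} $Z$ (you wrote the reverse containment), and the degenerate case $h \equiv 0 \pmod p$ is cleanly avoided by choosing the linear change of coordinates so that $f$ becomes \emph{monic} in $x_n$ up to a nonzero integer constant, not merely of positive degree.
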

\begin{proof}
Since any subscheme of codimension at least \(2\) is contained in a complete intersection subscheme of codimension \(2\), the result follows from \cite[Lemma 21]{poonen_stoll}.  See also \cite[Theorem 1.2]{ekedahl}.
\end{proof}

This result allows one to compute the global density of points that satisfy ``codimension at least \(2\)'' local conditions at all primes as a product of local densities.

As above, let \(Z \subset \aa^n\) be a subscheme of codimension at least \(2\).  Let \(S_p \subset Z(\ff_p)\) be a subset of the \(\ff_p\) points of \(Z\).  Write \(S = (S_p)_{p \text{ prime}}\).  Define
\[d(!S) = \lim_{N \to \infty} \frac{\#\{P \in \aa^n(\zz)_{\leq N} :  \forall p \  (P \mod p) \not\in S_p\}}{\# \aa^n(\zz)_{\leq N} }.\]

\begin{prop}\label{prop:density_product}
With the notation and assumptions as above,
\[ d(!S) = \prod_{p} \left( 1 - \frac{\#S_p}{\#\aa^n(\ff_p)} \right).\]
\end{prop}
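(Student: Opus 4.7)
The plan is to prove the formula by a standard truncation argument: compute the density subject to local conditions at primes up to some threshold \(M\) via the Chinese Remainder Theorem, then use the Ekedahl sieve (Lemma~\ref{lem:eke}) to show that extending the conditions to all primes changes the density by an error that vanishes as \(M \to \infty\).

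First I would fix \(M > 0\) and consider the truncated ratio
\[d_M(N) \colonequals \frac{\#\{P \in \aa^n(\zz)_{\leq N} :  (P \bmod p) \notin S_p \text{ for all primes } p \leq M\}}{\#\aa^n(\zz)_{\leq N}}.\]
Setting \(N_M = \prod_{p \leq M} p\), the Chinese Remainder Theorem gives \(\aa^n(\zz/N_M\zz) \simeq \prod_{p \leq M} \aa^n(\ff_p)\), so exactly \(\prod_{p \leq M}(\#\aa^n(\ff_p) - \#S_p)\) of the residue classes mod \(N_M\) satisfy the truncated condition.  Since integer points of height at most \(N\) equidistribute among residue classes mod a fixed integer (with error \(O(N^{n-1})\) per class, negligible against \(\#\aa^n(\zz)_{\leq N} \sim (2N+1)^n\)), one obtains
\[\lim_{N \to \infty} d_M(N) = \prod_{p \leq M} \left(1 - \frac{\#S_p}{\#\aa^n(\ff_p)}\right).\]

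Next, let \(d(N)\) denote the ratio inside the definition of \(d(!S)\). The defining condition for \(d(N)\) is stronger than the truncated one, so \(d(N) \leq d_M(N)\), and moreover
\[ 0 \leq d_M(N) - d(N) \leq \frac{\#\{P \in \aa^n(\zz)_{\leq N} : \exists p > M, \ (P\bmod p) \in S_p\}}{\#\aa^n(\zz)_{\leq N}}. \]
Because each \(S_p \subseteq Z(\ff_p)\) and \(Z\) has codimension at least \(2\) in \(\aa^n\), Lemma~\ref{lem:eke} forces the right-hand side to zero as \(N \to \infty\) and then \(M \to \infty\). Combining the two steps sandwiches both \(\liminf_N d(N)\) and \(\limsup_N d(N)\) between \(\prod_{p \leq M}(1 - \#S_p/\#\aa^n(\ff_p))\) and that same quantity minus a quantity tending to \(0\) with \(M\), so upon sending \(M \to \infty\) the limit defining \(d(!S)\) exists and equals \(\prod_{p}(1 - \#S_p/\#\aa^n(\ff_p))\).

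The main obstacle is essentially the Ekedahl sieve itself, which is already available as Lemma~\ref{lem:eke}; without the codimension-\(2\) hypothesis on \(Z\) the tail contribution from large primes need not vanish uniformly, and the product formula would fail. The only residual care lies in the equidistribution of integer points among residue classes (a routine lattice-point count) and in arranging the limits in \(N\) and \(M\) in the order dictated by the statement of the sieve.
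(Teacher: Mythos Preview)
Your proposal is correct and matches the paper's proof essentially step for step: both truncate to primes \(p \leq M\), evaluate the truncated density via the Chinese Remainder Theorem, bound the difference from the full density by the tail handled in Lemma~\ref{lem:eke}, and then squeeze as \(M \to \infty\). Your use of \(\liminf\)/\(\limsup\) to justify existence of the limit is in fact slightly more explicit than the paper's version.
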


\begin{rmk}
The Lang-Weil estimates guarantee that the infinite product on the right converges.
\end{rmk}

\begin{proof}
Let \(M\) be an integer.  We have bounds
\begin{align*} 
& \frac{\#\{P \in \aa^n(\zz)_{\leq N} :  \forall p\leq M \  (P \mod p) \not\in S_p\}}{\#\aa^n(\zz)_{\leq N}} \\
& - \frac{\#\{P \in \aa^n(\zz)_{\leq N} :  \exists p > M  \text{ s.t. }  (P \mod p) \in S_p\}}{\#\aa^n(\zz)_{\leq N}} \\
& \qquad \qquad \leq  \frac{\#\{P \in \aa^n(\zz)_{\leq N} :  \forall p \  (P \mod p) \not\in S_p\}}{\#\aa^n(\zz)_{\leq N}} \\
 & \qquad \qquad \qquad \leq \frac{\#\{P \in \aa^n(\zz)_{\leq N} :  \forall p \leq M \  (P \mod p) \not\in S_p\}}{\#\aa^n(\zz)_{\leq N}}.
 \end{align*}
 Taking the limit as \(N \to \infty\) (each term of which exists) we obtain:
 \begin{align*} 
& \lim_{N \to \infty}\frac{\#\{P \in \aa^n(\zz)_{\leq N} :  \forall p\leq M \  (P \mod p) \not\in S_p\}}{\#\aa^n(\zz)_{\leq N}} \\
&-  \lim_{N \to \infty}\frac{\#\{P \in \aa^n(\zz)_{\leq N} :  \exists p > M  \text{ s.t. }  (P \mod p) \in S_p\}}{\#\aa^n(\zz)_{\leq N}} \\
& \qquad \qquad \leq  d(!S) \\
 & \qquad \qquad \qquad \leq  \lim_{N \to \infty}\frac{\#\{P \in \aa^n(\zz)_{\leq N} :  \forall p \leq M \  (P \mod p) \not\in S_p\}}{\#\aa^n(\zz)_{\leq N}}.
 \end{align*}
Since \(S_p \subseteq Z(\ff_p)\), by Lemma \ref{lem:eke}, we have
\[ \lim_{M \to \infty} \lim_{N \to \infty} \frac{\#\{P \in \aa^n(\zz)_{\leq N} :  \exists p > M \text{ s.t. } (P \mod p) \in S_p)\}}{\#\aa^n(\zz)_{\leq N}} =  0.\]
Therefore
\[d(!S) =  \lim_{M \to \infty}\lim_{N \to \infty} \frac{\#\{P \in \aa^n(\zz)_{\leq N}, \forall p \leq M\  (P \mod p) \not\in S_p\}}{\#\aa^n(\zz)_{\leq N}}.\]
By the Chinese Remainder Theorem we have that
\[ \lim_{N \to \infty} \frac{\#\{P \in \aa^n(\zz)_{\leq N}, \forall p \leq M\  (P \mod p) \not\in S_p\}}{\#\aa^n(\zz)_{\leq N}} = \prod_{p \leq M} \left( 1 - \frac{\#S_p}{\#\aa^n(\ff_p)} \right).\]
The result follows by taking the limit as \(M \to \infty\).
\end{proof}

Proposition \ref{prop:density_product} formally implies the same result where we replace \(\aa^n\) by an open locus in \(\aa^n\) whose complement has codimension at least \(2\) (divide the density coming from the local conditions \(S\) in all of \(\aa^n\) by the density coming from the local conditions of being in the complement of the open locus).

\medskip

We now apply this to our situation.  To obtain an affine space and a codimension at least \(2\) subscheme, we will consider the \defi{frame space} \(\Fr(k, n)\) parameterizing bases of \((k+1)\)-dimensional subspaces of an \((n+1)\)-dimensional vector space.  Explicitly, \(\Fr(k,n)\) is the complement of the vanishing of the \((k+1)\)-minors in space of \((k+1)\times (n+1)\)-matrices, which is itself \(\aa^{(k+1)(n+1)}\).  If \( k< n\) then the subscheme where all \((k+1)\)-minors vanish is codimension at least \(2\).

On the other hand, the natural map
\[\pi \colon \Fr(k,n) \to \Gr(k,n)\]
sending a basis to its span exhibits \(\Fr(k,n)\) as a \(\GL_{k+1}\)-torsor over \(\Gr(k,n)\).  For this reason, for any subscheme \(W \subseteq \Gr(k,n)\), we have
\[\frac{\#W(\ff_p)}{\#\Gr(k,n)(\ff_p)} = \frac{\#\pi^{-1}W(\ff_p)}{\#\Fr(k,n)(\ff_p)}.\]

The $4$-planes that meet $\V_3$ form a Zariski-closed subscheme \(\W\) of $\gg_{\zz}(4, 14)$ of codimension $2$.  Since \(\pi^{-1} \W\) has codimension \(2\) in \(\Fr(4, 14)\),
we can use Proposition \ref{prop:density_product} to compute the density of planes \([P] \in \Gr(4,14)(\qq)\) that give rise to pairs $(Y_P, \alpha_P)$ that satisfy
\eqref{eq:triv_eval}.  Recall from the introduction that we define the density by sampling integral points on \(\Fr(4,14)\) of bounded height going to infinity.  This notion of density, therefore, agrees with the one used in Proposition \ref{prop:density_product} for subschemes of affine space.

We will write \(\sP\) to indicate the linear system \(P\) over \(\zz\).
Let $S_p \subseteq \gg(4, 14)(\ff_p)$ denote the subset of planes \(P_p \in \gg(4, 14)(\ff_p)\) such that there exists a point in $P_p(\ff_p)$ for which the corresponding quadric \emph{does not} have a smooth $\ff_p$-point.  By Lemma \ref{lem:sm_pt_criterion}, we have that \(S_p \subseteq \W(\ff_p)\).  (In particular, if the $4$-plane $\sP$ and $\V_3$ do not meet in $\pp^{14}_\zz$, then for every prime \(p\), we have that $[\sP \mod p]$ is not in \(S_p\).  Since it is easy to compute with the ideal $\I_3$, in practice we will use this criterion to rule out membership in $S_p$ for all \(p\) simultaneously.)

\begin{prop}\label{prop:eval_finite_primes}
The set of integral points of \(\Fr(4, 14)\) whose reductions are not contained in \(\pi^{-1}(S_p)\) for any \(p\) have density 
\[d(!S) =   \prod_p \left( 1 - \frac{\#S_p}{\#\Gr(4, 14)(\ff_p)} \right) \geq .73.\]
\end{prop}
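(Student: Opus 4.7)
The equality $d(!S) = \prod_p \bigl(1 - \#S_p/\#\Gr(4, 14)(\ff_p)\bigr)$ follows directly from Proposition \ref{prop:density_product} (together with the remark following it concerning codimension-$\geq 2$ open subsets of $\aa^n$). To verify the codimension hypothesis, recall from Lemma \ref{lem:sm_pt_criterion} that $S_p \subseteq \W(\ff_p)$, and observe that $\V_3 \subseteq \pp^{14}_\zz$ has codimension $6$ (as the rank-$\leq 2$ locus of symmetric $5 \times 5$ matrices). A Schubert dimension count then shows that $\W$ has codimension $2$ in $\Gr(4, 14)$, so $\pi^{-1}(\W) \subseteq \Fr(4, 14)$ has codimension $2$ inside an open subset of $\aa^{75}$ whose complement is itself codimension $\geq 2$. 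Finally, since $\pi$ is a $\GL_5$-torsor, the ratios $\#\pi^{-1}(S_p)/\#\Fr(4, 14)(\ff_p)$ and $\#S_p/\#\Gr(4, 14)(\ff_p)$ agree, delivering the stated product formula.

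For the lower bound $d(!S) \geq 0.73$, the plan is to estimate each local factor from below using the inclusion $S_p \subseteq \W(\ff_p)$. To control $\#\W(\ff_p)$ I would use the incidence variety
\[\mathcal{J} \colonequals \{(P, q) \in \Gr(4, 14) \times \V_3 : q \in P\},\]
which projects onto $\W$ generically one-to-one and onto $\V_3$ with fibers $\Gr(3, 13)$ (parameterizing the $4$-planes in $\pp^{14}$ through a fixed point). Combining the Gaussian binomial identity $\#\Gr(3, 13)(\ff_p)/\#\Gr(4, 14)(\ff_p) = (p^5 - 1)/(p^{15} - 1)$ with the Lang--Weil estimate $\#\V_3(\ff_p) = O(p^8)$ yields $\#\W(\ff_p)/\#\Gr(4, 14)(\ff_p) = O(p^{-2})$ asymptotically, so the infinite product converges.

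The hard part is purely numerical. Since the crude bound $\#\W(\ff_p) \leq \#\V_3(\ff_p) \cdot \#\Gr(3, 13)(\ff_p)$ is too weak for the smallest primes (at $p = 2$ it already exceeds $\#\Gr(4, 14)(\ff_p)$), my plan is to: (i) compute $\#\V_3(\ff_p)$ and $\#\W(\ff_p)$ exactly for primes up to a cutoff $N$ via the code accompanying this paper, with special attention at $p = 2$, where $\V_3$ is defined by saturation and can be enumerated using the generators of $\bar{I}$ from the proof of Lemma \ref{lem:sm_pt_criterion}; (ii) compute the corresponding finite product $\prod_{p \leq N}(1 - \#\W(\ff_p)/\#\Gr(4, 14)(\ff_p))$; and (iii) bound the tail $\prod_{p > N}(1 - O(p^{-2}))$ from below via an effective form of the Lang--Weil estimates. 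The main obstacle is calibrating $N$ so that the tail bound combined with the exact small-prime contribution exceeds $0.73$; if needed, one sharpens the small-prime factors by enumerating $S_p$ itself rather than $\W(\ff_p)$.
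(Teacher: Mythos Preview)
Your derivation of the product formula is correct and matches the paper's.  The gap is in the numerical lower bound.

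You bound $\#S_p$ by $\#\W(\ff_p)$, i.e.\ by the number of $4$-planes meeting the rank-$\leq 2$ locus $\V_3$.  The paper instead works directly with the set
\[
B_p \colonequals \{[Q] \in \pp^{14}(\ff_p) : Q \text{ has no smooth } \ff_p\text{-point}\},
\]
so that $S_p$ is exactly the set of $4$-planes containing an $\ff_p$-point of $B_p$, and uses the crude incidence bound $\#S_p \leq \#B_p \cdot \#\Gr(3,13)(\ff_p)$.  The point is that $B_p$ admits an exact, closed-form count: a quadric with no smooth $\ff_p$-point is either a double hyperplane or a pair of Galois-conjugate hyperplanes, giving
\[
\#B_p = \#\Gr(3,4)(\ff_p) + \#\Gr(2,4)(\ff_p)\cdot \tfrac{p^2-p}{2}.
\]
This yields an explicit rational function $b(p) \sim 1/(2p^2)$ bounding the local defect, uniformly in $p$ (including $p=2$).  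One then computes $\prod_{p<100}(1-b(p)) \geq 0.737$ from the formula and bounds the tail by $\sum_{p>100} b(p) < \int_{100}^\infty (0.50005)/t^2\,dt$, with no Lang--Weil needed.

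Your route via $\V_3$ cannot reach $0.73$ as stated: since $\V_3(\ff_p)$ also contains the \emph{split} rank-$2$ quadrics (which do have smooth points), one has $\#\V_3(\ff_p) \approx 2\,\#B_p$ for $p$ odd, so the incidence bound through $\V_3$ is roughly $2b(p)$.  The resulting product is approximately $\prod_p(1-b(p))^2 \approx (0.737)^2 \approx 0.54$.  Your proposed fixes do not close this gap: computing $\#\W(\ff_p)$ exactly is infeasible (it sits in a $50$-dimensional Grassmannian), and even its exact value still overcounts $\#S_p$ because planes can meet $\V_3$ only in split rank-$2$ points; ``enumerating $S_p$ itself'' amounts to enumerating $B_p$ and using the incidence bound, which is precisely the paper's argument once you observe the closed-form count of $B_p$.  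Replacing $\V_3$ by $B_p$ is the missing step.
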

\begin{proof}
The left equality follows from Proposition \ref{prop:density_product}.  It suffices now to compute a lower bound on this infinite product.
Let $B_p \subset \pp^{14}(\ff_p)$ denote the set of quadrics over $\ff_p$ that \emph{do not} have a smooth $\ff_p$-point.   
This is exactly the union of the set of double planes and the set of quadratic conjugate planes (when \(p> 2\) these are the quadrics of rank \(1\) and non-split quadrics of rank $2$) .  The set of double planes has cardinality $\#\gg(3, 4)(\ff_p)$.  Similarly, a pair of quadratic conjugate planes is a cone with vertex a codimension \(2\) plane over a nonsplit quadric in \(\pp^1\).  Fixing the vertex, we may assume that it is of the form $x_0^2 - bx_0x_1 + c x_1^2$ for some $b, c \in \ff_p$.  If \(p = 2\), such a quadric is nonsplit if and only if \(b = c= 1\).  If \(p > 2\), such a quadric is nonsplit if and only if $\Delta = b^2 - 4c$ is not a square in $\ff_p^\times$.  There are therefore $(p^2-p)/2$ such choices of $b$ and $c$.  In total, we have
\[\#B_p = \#\gg(3, 4)(\ff_p) + \#\gg(2, 4)(\ff_p) \cdot (p^2-p)/2.\]

Finally, the number of $4$-planes over $\ff_p$ that contain an $\ff_p$-point in $B_p$ is at most $\#\gg(3, 13)\#B_p$.  Therefore the fraction of $4$-planes in $\gg(4, 14)(\ff_p)$ that contain an $\ff_p$-point in $B_p$ is at most
\[ b(p) \colonequals \frac{\#\gg(3, 13)(\ff_p)\#B_p}{\#\gg(4, 14)(\ff_p)} =  \frac{p^8 + p^6 + 2p^4 + p^3 + 2p^2 + p + 2}{2p^{10} + 2p^5 + 2} . \]
The function $p^2b(p)$ is monotonically decreasing for $p >2$ and has an asymptote at $1/2$.  Its value on the interval of $p \geq N$ is therefore bounded by
\begin{equation}\label{bound}
\max(N^2b(N), 1/2)
\end{equation} 
We have 
\begin{align*}
d(!S) & = \prod_p \left( 1 - b(p) \right)\\
&\geq \left(\prod_{p < 100} \left( 1 - b(p) \right)\right) \cdot \left( 1 - \sum_{p > 100} b(p) \right)
\intertext{One can compute (e.g., using the functions in the file \texttt{probability.sage}) that the first factor is at least $.737$.  Combining this with \eqref{bound},}
&\geq (.737)\left(1 - \sum_{p > 100} \frac{.50005}{p^2} \right) \\ 
&\geq (.737)\left(1 - (.50005) \int_{100}^\infty \frac{1}{p^2} dp \right) \\
&\geq .73. \qedhere
\end{align*}
\end{proof}

\begin{prop}\label{prop:fam_sm_pts}
Suppose that $(Y_P, \alpha_P)$ is defined by a regular $4$-dimensional linear system  $[P] \in \gg(4, 14)(\qq) = \gg(4, 14)(\zz)$ that is not contained in the set \(S_p \subset \gg(4, 14)(\ff_p)\) for any \(p\).  Then for any $\qq_p$-point $y \in Y_P(\qq_p)$,
the associated quadric $Q_y$ has a smooth $\qq_p$-point.
\end{prop}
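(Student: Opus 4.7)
The plan is to reduce mod $p$, invoke the hypothesis on $S_p$ to produce a smooth $\ff_p$-point on the reduced quadric, and then lift by Hensel's lemma.

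First, I would extract an integral model of $Q_y$. Composing $y$ with the finite morphism $Y_P \to H \hookrightarrow P$ yields a $\qq_p$-point of $P \cong \pp^4_{\qq_p}$, which by the valuative criterion of properness extends uniquely to a $\zz_p$-point of $\pp^4_{\zz_p}$. Concretely, this corresponds to choosing a primitive integral representative for the defining tuple $(t_0, \dots, t_4)$; the associated polynomial cuts out $Q_y$ as a flat hypersurface in $\pp^4_{\zz_p}$, whose reduction mod $p$ is a quadric $\bar{Q}_y$ over $\ff_p$ corresponding to the resulting $\ff_p$-point of $P_{\ff_p}$.

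Second, I would apply the hypothesis. Since $[P \mod p] \notin S_p$, and $S_p$ was defined as the set of $4$-planes admitting \emph{some} $\ff_p$-point whose associated quadric lacks a smooth $\ff_p$-point, its negation says that every $\ff_p$-point of $P_{\ff_p}$ corresponds to a quadric with a smooth $\ff_p$-point. In particular, $\bar{Q}_y$ has a smooth $\ff_p$-point $\bar{x}$.

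Finally, I would lift $\bar{x}$ via Hensel's lemma. Because $Q_y \subset \pp^4_{\zz_p}$ is cut out by a primitive polynomial, it is flat over $\zz_p$ with special fiber $\bar{Q}_y$, and the Jacobian criterion identifies smoothness of $Q_y \to \Spec \zz_p$ at a closed point of the special fiber with smoothness of $\bar{Q}_y$ at that point. Thus $\bar{x}$ is a smooth $\zz_p$-point of $Q_y$, and Hensel's lemma lifts it to a $\zz_p$-point whose generic fiber gives the desired smooth $\qq_p$-point of $Q_y$. I do not expect any real obstacle: the only care needed is to confirm that smoothness on the special fiber upgrades to smoothness of the integral model, which is standard for hypersurfaces in projective space over a DVR.
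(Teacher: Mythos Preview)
Your proposal is correct and follows essentially the same approach as the paper: spread out the $\qq_p$-point of $P$ to a $\zz_p$-point by properness, use the hypothesis $[P \bmod p] \notin S_p$ to obtain a smooth $\ff_p$-point on the reduced quadric, and then lift via Hensel's lemma applied to the smooth locus of the integral quadric over $\zz_p$. Your extra remarks about primitivity and the Jacobian criterion make explicit exactly the point the paper compresses into the phrase ``the smooth locus of $\sQ \to \Spec(\zz_p)$ has $\ff_p$-points.''
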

\begin{proof}
We spread out $P\simeq \pp^4_\qq$ to $\sP \simeq \pp^4_{\zz}$.  By slight abuse of notation, we also write $y \in P(\qq_p)$ for the image of $y \in Y(\qq_p)$.
This spreads out to a map $\Spec(\zz_p) \to \sP$ by properness of $\sP$ over $\Spec(\zz)$, and therefore gives a quadric $\sQ$ over $\Spec(\zz_p)$ with generic fiber $Q_y$ and special fiber $\bar{Q}$ a quadric over $\ff_p$.  Since $[P] \not\in S_p$, this quadric $\bar{Q}$ has a smooth $\ff_p$-point.  Therefore, the smooth locus of $\sQ \to \Spec(\zz_p)$ has $\ff_p$-points, and hence by Hensel's Lemma 
\cite{poonen_2017}*{Theorem~3.5.63} 
it has $\zz_p$-points.  Therefore $Q_y$ has smooth $\qq_p$-points.  
\end{proof}

\begin{cor}\label{cor:expected_behavior}  A randomly generated integral basis yields a span \(P\) 
for which the associated pair $(Y_P, \alpha_P)$ satisfies trivial evaluation at all finite places \eqref{eq:triv_eval} with probability at least \(.73\).
\end{cor}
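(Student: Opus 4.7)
The plan is to assemble the ingredients developed in Section \ref{expectedbehavior}. Proposition \ref{prop:eval_finite_primes} already delivers the density estimate at the level of integral bases: the set of integral bases in $\Fr(4,14)(\zz)$ whose span $P$ avoids the subsets $S_p \subset \gg(4,14)(\ff_p)$ at every prime $p$ has density at least $.73$. My task is to translate this geometric condition on $P$ into the trivial-evaluation condition \eqref{eq:triv_eval} on the associated pair $(Y_P, \alpha_P)$.

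For any regular such $P$, I would chain together three facts. First, Proposition \ref{prop:fam_sm_pts} asserts that for every prime $p$ and every $y \in Y_P(\qq_p)$, the associated quadric $Q_y$ has a smooth $\qq_p$-point. Second, since $Q_y$ has rank at least $3$ by regularity, Lemma \ref{lem:trivial_evaluation_criterion} guarantees that $Q_y$ contains a $\qq_p$-rational $2$-plane in the ruling specified by $y$. Third, the evaluation formula \eqref{inv_calculation} identifies this geometric condition with $\inv_p(\alpha_P(y)) = 0$. Running over all primes $p$ and all $\qq_p$-points $y$ yields exactly \eqref{eq:triv_eval}.

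The main item requiring care — the only step one might call an obstacle — is the regularity hypothesis, which is implicit in the very definition of the pair $(Y_P, \alpha_P)$ but is not enforced by avoidance of the sets $S_p$. Regularity is a Zariski-open condition on $\Fr(4,14)$, cut out by nonvanishing of certain polynomial conditions on the five defining quadrics, and the regular locus is nonempty (as the explicit example of Theorem \ref{thm:example} will demonstrate). Thus the non-regular locus is a proper Zariski-closed subset of $\Fr(4,14) \subset \aa^{75}$. Since a proper closed subvariety of $\aa^n$ contains only $O(N^{n-1})$ integral points of height at most $N$, while $\aa^n(\zz)_{\leq N}$ grows like $\Theta(N^n)$, non-regular integral bases contribute density zero. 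Restricting to regular $P$ therefore does not affect the lower bound, and the $.73$ estimate of Proposition \ref{prop:eval_finite_primes} transfers directly to the statement of the corollary.
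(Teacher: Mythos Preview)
Your proposal is correct and follows essentially the same route as the paper: invoke Proposition~\ref{prop:eval_finite_primes} for the density bound, observe that regularity is a nonempty Zariski-open condition and hence fails only on a set of density zero, and then for regular $P$ with $[P \bmod p] \notin S_p$ chain Proposition~\ref{prop:fam_sm_pts}, Lemma~\ref{lem:trivial_evaluation_criterion} (using that regularity forces rank $\geq 3$), and \eqref{inv_calculation} to obtain \eqref{eq:triv_eval}. The paper's proof is terser but structurally identical.
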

\begin{proof}
Since regularity is an open condition, a random linear system $P$ will be regular with probability $1$.  Hence with probability at least $.73$, it is a regular linear system with $[P \mod p] \not\in S_p$ for all \(p\).
By Proposition \ref{prop:fam_sm_pts}, for every $y \in Y_P(\qq_p)$, the corresponding quadric $Q_y$ has a smooth $\qq_p$-point.
Since regularity implies that $P$ avoids the locus of quadrics of rank $2$ or less, it now follows by Equation \eqref{inv_calculation} and Lemma \ref{lem:trivial_evaluation_criterion} that $\alpha_P(y)=0$, as desired.
\end{proof}

\subsection{Proofs of the main theorems} \label{sec:mainproof} In this section, we prove the main theorems guaranteeing that $Y_P$ has a Brauer--Manin obstruction to weak approximation.

\begin{proof}[Proof of Theorem \ref{thm:main}]
 Let $Y = Y_P$ be the Calabi--Yau threefold with class $\alpha = \alpha_P\in \Br(Y_P)[2]$ that is associated to the choice of a regular $4$-plane $P$ in $\pp^{14}_{\qq}$.  By assumption, there exists a real point $h \in H(\rr)$ so that the corresponding quadric $Q_h$ has signature $(+1, +1, +1, +1, 0)$ or $(-1, -1, -1, -1, 0)$.  As such, $Q_h$ is the cone over a smooth quadric $\bar{Q}_h$ in $\pp^3_\rr$ and the discriminant of $\bar{Q}_h$ is a square in $\rr$.  Therefore the preimage of $h$ in $Y$ is two points $y_1$ and $y_2$ in $Y(\rr)$.  However, the quadric $Q_h$ has no smooth $\rr$-points, since $\bar{Q}_h(\rr) = \emptyset$.  Therefore, by Lemma \ref{lem:trivial_evaluation_criterion} in combination with \eqref{inv_calculation}, we have that
\[\alpha(y_1) = \alpha(y_2) = \tfrac{1}{2} \in \Br(\rr).\]
However, by Lemma \ref{lem:evaluation_real}, there must exist a point $y_0 \in Y(\rr)$ such that $Z|_q(\rr) \neq \emptyset$.  Again by \eqref{inv_calculation}, we have
\[\alpha(y_0) = 0 \in \Br(\rr).\]
By Lemma \ref{lem:real_obstruction}, this implies that $Y$ fails to satisfy weak approximation.
\end{proof}

\begin{proof}[Proof of Corollary \ref{cor:easy}]
The quadric $Q_0:=x_0^2 + x_1^2 + x_2^2 + x_3^2 = 0$ defined over $\qq$ is the cone over a smooth quadric surface of discriminant $1$.  Therefore, it gives rise to a $\qq$-point on $Y_P$ for any linear system \(P\) containing \([Q_0]\).  Furthermore, over $\rr$, this quadric has signature $(+1, +1, +1, +1, 0)$.
Finally, the condition of regularity (Definition \ref{def:reg}) is open; therefore, it suffices to show that one such $4$-plane through $[Q_0]$ is regular.

We verified the regularity of  the  $4$-plane $P$ given by the following additional generating quadrics with  Magma, using code in the ancillary file \texttt{yexample.magma}, located in the repository \cite{githubrepo} and in the supplementary files.
\begin{align*}
&    x_0x_1 + x_0x_4 + x_1x_3 - x_1x_4 + x_2x_3 + x_3x_4,\\
&    x_0^2 - x_0x_2 - x_0x_4 + x_1x_2 - x_1x_3 - x_1x_4 - x_2^2 - x_2x_4 - x_3x_4 - x_4^2,\\
&    x_0x_1 + x_0x_2 + x_0x_4 + x_1^2 - x_1x_4 + x_2^2 + x_2x_3 - x_2x_4 + x_3^2,\\
&    x_0^2 + x_0x_1 + x_0x_2 - x_0x_4 - x_1^2 - x_1x_2 + x_1x_3 - x_1x_4 - x_2x_3 - x_2x_4+ x_4^2.\hfill\qedhere
  \end{align*}
\end{proof}

\subsection{Explicit example}\label{sec:example}

In this section we prove Theorem \ref{thm:example}, explicitly exhibiting a choice of quadrics \(P\) for which the associated Brauer class $\alpha_P$ on the associated Calabi--Yau threefold $Y_P$ obstructs weak approximation. Recall that $Q_0, Q_1, Q_2, Q_3,$ and $Q_4$ are given by the equations
\begin{align*}
&x_0x_1 + x_2x_3\\
&x_0^2 + x_0x_1 + 3x_0x_2 + 2x_1^2 + x_1x_2 + x_1x_3 + 5x_2^2 + x_2x_3 + x_3^2\\
&x_0^2 + 2x_0x_2 + x_0x_3 + x_0x_4 + x_1x_2 + x_1x_4 + x_2x_4 + 4x_3x_4\\
&4x_0^2 + 4x_1^2 + 3x_1x_3 + 2x_1x_4 + 3x_2^2 + x_2x_4 + x_4^2\\
&x_0^2 + x_0x_2 + x_0x_3 + 4x_0x_4 + 3x_1x_2 + 4x_1x_4 + 2x_2x_3 + 2x_3^2 + x_4^2.
\end{align*}
Furthermore, this example will exhibit the ``expected" behavior of Corollary~\ref{cor:expected_behavior}: the pair $(Y_P, \alpha_P)$ has constant trivial evaluation at all finite primes \eqref{eq:triv_eval}.  

\begin{proof}[Proof of Theorem \ref{thm:example}]
Corollary \ref{cor:expected_behavior} proves the first part of the Theorem.
   
Let $Q_0, \dots, Q_4$ be the explicit choice of linearly independent quadrics
given above.  The quadrics $Q_0, \dots, Q_4$ define a regular system $P$, which can be verified using
using Magma code included in the ancillary file \texttt{yexample.magma}. 
Write \((Y, \alpha) = (Y_P, \alpha_P)\).  We first show the containments
\[\emptyset \neq Y_P(\qq) \subset Y_P(\aa_{\qq})^{\alpha_P} \subsetneq Y_P(\aa_{\qq}).\]
The quadric $Q_0$ is rank $4$, and is a cone with vertex $[0:0:0:0:1]$ over the smooth quadric 
\[\bar{Q}_0 \colonequals x_0x_1 + x_2x_3\]
in $\pp_\qq^3$.  Since $\disc(\bar{Q}_0) = 1$ is a square, the two rulings of lines on $\bar{Q}_0$ are defined over $\qq$.  Therefore the fiber over $[Q_0] \in H(\qq)$ in $Y$ consists of two $\qq$-points, and so $Y(\qq)  \neq \emptyset$ (and, in particular, \(Y\) is everywhere locally soluble).  Furthermore, $Q_1 \colonequals x_0^2 + x_0x_1 + 3x_0x_2 + 2x_1^2 + x_1x_2 + x_1x_3 + 5x_2^2 + x_2x_3 + x_3^2$ has signature $(+1,+1,+1, +1, 0)$; therefore this choice of $P$ satisfies the conditions of Theorem \ref{thm:main}, and $\alpha$ obstructs weak approximation on $Y$.

Finally, we show that $(Y_P, \alpha_P)$ has constant trivial evaluation at all finite primes \eqref{eq:triv_eval}.
Let $Q$ be the universal quadric $Q = t_0 Q_0 + \dots + t_4 Q_4$ over the projective space $P \simeq \pp^4_{\zz}$. 
Let $J_3$ be the ideal of $\zz[t_0, \dots, t_4]$ given by specializing $\I_3$ to $Q$ and then saturating with respect to the ideal $(t_0, \dots, t_4)$. A Groebner basis calculation over $\zz$ shows that $1$ is in $J_3$, and therefore for the choice of quadrics above, we have $P \cap \V_3 = \emptyset$.  Therefore the reduction of every quadric in $P$ (and hence \(Y\)) modulo every prime \(p\) has a smooth \(\ff_p\) point by Lemma \ref{lem:sm_pt_criterion}.  Hence \([P \mod p]\) is not contained in the set \(S_p\) for any prime \(p\).
Thus, by Proposition \ref{prop:fam_sm_pts}, for every $\qq_p$-point $y \in Y(\qq_p)$, the associated quadric $Q_{y}$ has a smooth $\qq_p$-point.  Hence by Lemma \ref{lem:trivial_evaluation_criterion} and equation \eqref{inv_calculation}, $y^*\alpha = \alpha|_y =0 \in \Br(\qq_p)[2]$.

\end{proof}

\subsection{Obstructions from finite primes}

\begin{prop}\label{prop:q3}
Let $P  \subset \pp H^0(\pp_{\qq}^4, \O_{\pp^4}(2))$ be a general $4$-plane through the points $[Q_0 \colon 6x_0^2 -3x_1^2 + 2x_2^2 - x_3^2 = 0]$ and
  $[Q_1 \colon x_0x_1 + x_2x_3= 0]$
in $\pp^{14}(\qq)$.  Then $\emptyset \neq \overline{Y_P(\qq)} \subsetneq Y_P(\aa_\qq)$ and so $Y_P$ does not satisfy weak approximation.
\end{prop}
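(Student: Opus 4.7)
The plan is to mimic the argument used for Theorem \ref{thm:main}, substituting the finite place $v = 3$ for the real place and replacing the signature hypothesis with a local anisotropy hypothesis at $3$. Concretely, I would exhibit two $\qq_3$-points of $Y_P$ at which $\alpha_P$ takes distinct values in $\Br(\qq_3)[2]$, and then invoke Lemma \ref{lem:real_obstruction} at $v = 3$. The two specified quadrics in $P$ will supply these points: $Q_1$ gives points with trivial evaluation, and $Q_0$ gives points with nontrivial evaluation.

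For the first point, observe that $Q_1 = x_0x_1 + x_2x_3$ is a rank $4$ quadric with vertex $[0:0:0:0:1]$ and square discriminant, and it has the obvious smooth $\qq$-point $[1:0:0:0:0]$. Thus $[Q_1] \in H(\qq)$ lifts to two $\qq$-points $y_1, y_1'$ of $Y_P$ (one for each ruling), and by Lemma \ref{lem:trivial_evaluation_criterion} together with \eqref{inv_calculation}, $\alpha_P$ has trivial evaluation at both. In particular $Y_P(\qq) \neq \emptyset$, which both verifies $\overline{Y_P(\qq)} \neq \emptyset$ and ensures the hypothesis $Y_P(\aa_\qq) \neq \emptyset$ of Lemma \ref{lem:real_obstruction}.

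For the second point, the quadric $Q_0 = 6x_0^2 - 3x_1^2 + 2x_2^2 - x_3^2$ is of rank $4$ with vertex $[0:0:0:0:1]$ and discriminant $36$, a square; consequently $[Q_0]$ lifts to two $\qq_3$-points $y_0, y_0'$ of $Y_P$. The crucial technical step is to show that $\bar Q_0 \subset \pp^3_{\qq_3}$ is anisotropic, so that $Q_0$ has no smooth $\qq_3$-points. A brief descent suffices: given a primitive integral solution, reducing mod $3$ gives $x_3^2 \equiv 2 x_2^2 \pmod{3}$, and since $2$ is a non-square in $\ff_3$ we conclude $3 \mid x_2, x_3$; writing $x_2 = 3x_2'$, $x_3 = 3 x_3'$ and dividing the equation by $3$ yields $2x_0^2 - x_1^2 + 6 x_2'^2 - 3 x_3'^2 = 0$, and the same mod-$3$ analysis then forces $3 \mid x_0, x_1$, contradicting primitivity. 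By Lemma \ref{lem:trivial_evaluation_criterion} and \eqref{inv_calculation}, $\alpha_P$ has evaluation $\tfrac12$ at both $y_0$ and $y_0'$.

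Finally, regularity of $P$ (Definition \ref{def:reg}) is an open condition in the space of $4$-planes through $[Q_0]$ and $[Q_1]$, so it suffices to exhibit a single regular such $P$; this is a routine Magma check analogous to the one in the proof of Corollary \ref{cor:easy}, confirming that the generic hypothesis is nonempty. For any such regular $P$, combining the evaluation computations above with Lemma \ref{lem:real_obstruction} applied at $v = 3$ (taking $Q_v = y_1$ and $Q_v' = y_0$) yields the proper containment $\overline{Y_P(\qq)} \subseteq Y_P(\aa_\qq)^{\alpha_P} \subsetneq Y_P(\aa_\qq)$, so weak approximation fails. The only substantive obstacle is the anisotropy verification for $\bar Q_0$ over $\qq_3$, but as indicated above this is entirely elementary; the rest is a direct adaptation of the proof of Theorem \ref{thm:main}.
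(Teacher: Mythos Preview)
Your proposal is correct and follows essentially the same approach as the paper: you use $Q_1$ to produce a $\qq$-point of $Y_P$ with trivial $\alpha_P$-evaluation (and to witness $Y_P(\qq)\neq\emptyset$), use the $3$-adic anisotropy of $\bar Q_0$ to produce a $\qq_3$-point with evaluation $\tfrac12$, note that regularity is open and must be checked on one example, and conclude via Lemma~\ref{lem:real_obstruction} at $v=3$. The only cosmetic differences are that the paper phrases the anisotropy as a valuation-growth argument rather than an explicit descent, and it supplies the three additional quadrics used in the Magma regularity check.
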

\begin{proof}
The condition of regularity (Definition \ref{def:reg}) is open.

The quadric $Q_0$ does not contain any smooth $\qq_3$-points, as we now show. We may rewrite the equation for $Q_0$ as
$3(2x_0^2-x_1^2)+(2x_2^2-x_3^2)=0$, and directly check modulo $3$ we only have the non-smooth point $(0:0:0:0:1)$. Consider solutions to $Q_0$ mod $3^n$ for $n \in \nn$; the valuation of $x_i$ in the solution must increase as $n$ does, for $i = 0, \dots , 3$.

However, $Q_0$ is the cone over a smooth quadric surface with discriminant~$36$, so $Y_P$ has at least one $\qq_3$-point over $[Q_0] \in H(\qq_3)$.

The quadric $Q_1$ contains $\qq$-points 
and, over $\qq$, is the cone over a smooth quadric surface of discriminant $1$.  Therefore, it also contains smooth $\qq_3$-points and gives rise to a $\qq_3$-point on $Y_P$. Thus, by Lemma \ref{lem:real_obstruction}, for such a choice of regular $4$-plane $P$, the Brauer class $\alpha_P$ will obstruct weak approximation.

To finish, it suffices to produce a regular $4$-plane $P$ containing $[Q_0]$ and $[Q_1]$.
The regularity of  the  $4$-plane  given by the following additional generating quadrics was verified with Magma, using code in the ancillary file \texttt{yexample.magma}.
\begin{align*}
 x_0^2 &- x_0x_2 - x_0x_3 - x_1^2 + x_1x_2 - x_2^2 - x_2x_4 + x_3^2 - x_3x_4 - x_4^2,\\
    x_0^2 &+ x_0x_1 + x_0x_2 - x_0x_3 + x_0x_4 - x_1^2 + x_1x_2 - x_1x_3 + x_2x_3 - x_2x_4,\\
        &- x_3^2 - x_3x_4,\\
    x_0^2 &- x_0x_3 + x_1^2 + x_1x_2 + x_1x_3 + x_2^2 + x_3x_4 - x_4^2.\hfill\qedhere
\end{align*}
\end{proof}

\begin{rmk}\label{rmk:rational}
\hfill
\begin{enumerate}[(a)]
\item  It is possible that the example produced in the proof of Proposition~\ref{prop:q3} satisfies the hypotheses of Theorem~\ref{thm:main}. 
It has been chosen so that none of its generators, considered over $\rr$, have signature $(+1,+1,+1,+1,0)$ or $(-1,-1,-1,-1,0)$, but  some element of $P$ may correspond to a quadric which does.
  
\item  One can also ask if there are examples of linear systems \(P\) where $\alpha_P$ obstructs the Hasse principle on $Y_P$, i.e., where $\alpha_P$ has constant evaluation at all $\qq_v$-points for each place $v$, but the sum of the invariants over all places is nontrivial.  It appears that $P$ would have to be incredibly special in order for this to occur.  First, by Lemma \ref{lem:evaluation_real}, there always exists a real point of $Y_P$ where $\alpha_P$ evaluates to $0$, and so evaluation at real points would need to be trivial.  Therefore, there must exist an odd number of finite places~$p$ for which the evaluation at all $\qq_p$-points is constant and nontrivial. However, by Lemmas \ref{lem:trivial_evaluation_criterion} and \ref{prop:fam_sm_pts}, this only occurs if every such quadric over $\qq_p$ (which is necessarily of rank at least $3$) reduces to a quadric with rank at most $2$ over $\ff_p$.
\end{enumerate}  
\end{rmk}

\section{The derived equivalent threefold $X$} \label{sec:derivedequiv}

Hosono and Takagi show in \cite{ht3} that a
choice of regular linear system $P$ as in Definition \ref{def:reg} can also be used to construct another Calabi--Yau variety they call $X = X_P$.
Hosono and Takagi proved in \cite{ht1}*{Theorem~8.0.2}  that, for any fixed choice of regular linear system $P$ over $\cc$, the complex varieties $X_P$ and $Y_P$ are derived equivalent but non-birational over $\cc$. That is, there is a $\cc$-linear equivalence between their bounded derived categories of coherent sheaves.
See \cite{FMTransforms} or \cite{caldararu} for an introduction to these categories.

In many cases, derived equivalence is a strong relationship. For example, a smooth curve is determined uniquely by its associated derived category. 
Examples of varieties where some invariant is not preserved under such an equivalence is considered very interesting. Hosono and Takagi's Calabi--Yau threefolds are one of very few known examples
 of derived equivalent varieties that do not have the same fundamental group \cite{ht1} or Brauer group \cite{br_not_inv_HHLV}. 

There has been recent interest in comparing
the rational points 
of derived equivalent varieties; see \cite{HT}.
Work of Addington, Antieau, Frei and Honigs \cite{aafh} gives
the first examples of derived equivalent varieties where one variety has a $\qq$-point and the other does not. The varieties in these examples are all either hyperk\"ahler fourfolds or abelian varieties of dimension at least $2$.
Because Hosono and Takagi's Calabi--Yau threefolds are already known to not share some invariants, particularly the Brauer group, they seem a likely place to look for such behavior occurring in a different class of varieties. However, to the extent that we are able to compare them, we find the same behavior in both $X$ and $Y$ with regard to $\qq$-rational points. This raises several questions: Is there some special case in which these examples do not share the same behavior with regard to $\qq$-points? If not, is derived equivalence a more restrictive condition on Calabi--Yau threefolds with regard to $\qq$-rational points than it is on the types of varieties in the examples of \cite{aafh}? If so, what is the mechanism?

In this section we give the construction of $X_P$ and show that if $X_P$ and $Y_P$ are defined over $\qq$, then Hosono and Takagi's derived equivalence is also defined over $\qq$. 
Then, in order to compare $X_P$ with $Y_P$,  we show that for any regular $P$, if $X_P(\qq)\neq\emptyset$ then $X_P$ does not satisfy weak approximation. Although $X_P$ and $Y_P$ do not have equal Brauer groups or fundamental groups, in all examples we have analyzed,  $X_P$ and $Y_P$ both have $\qq$-points and do not satisfy weak approximation.

\subsection{The variety $X$}\label{sec:defx}
Consider a choice of regular linear system $P$ over a field $K$
generated by quadrics $Q_0,\ldots,Q_4\subseteq\pp^4$.
When the characteristic of $K$ is not $2$, the data of each quadric $Q_i$ is equivalent to the data of a projective symmetric bilinear form $B_i$, and hence a $(1,1)$-divisor  in $\pp^4 \times \pp^4$ that is symmetric under the $\zz/2\zz$-action swapping the factors.  Let $\tilde{X}_P$ denote the complete intersection of $B_0, \dots, B_4$.
By the regularity of $P$, $\tilde{X}_P$ does not meet the diagonal in $\pp^4 \times \pp^4$,  
and hence the $\zz/2\zz$-action induces
a fixed-point-free involution $\iota \colon \tilde{X}_P \to \tilde{X}_P$.  In these terms, $X_P$ is the quotient by this involution:
\[X_P = \tilde{X}_P / \iota. \]
The smoothness of $X_P$ is equivalent to the regularity of $P$
 \cite{ht3}*{Proposition~2.1}.

\subsection{The kernel of the equivalence}

For smooth projective varieties $X$ and $Y$,
by a result of Orlov,
equivalences between their associated derived categories are produced by \defi{Fourier-Mukai transforms} \cite[Thm~5.14]{FMTransforms}. Such a transform is defined via an object in the derived category of $X\times Y$, which we call a \defi{Fourier-Mukai kernel}. The interested reader is directed to \cite[Chapter~5]{FMTransforms} for a precise discussion on the construction of these functors. 

\begin{lem}
For any choice of regular system $P$ defined over $\qq$, there is a $\qq$-linear derived equivalence between the associated varieties $X_P$ and $Y_P$.  
\end{lem}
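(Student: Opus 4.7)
The plan is to show that the Hosono--Takagi Fourier--Mukai kernel $\mathcal{K}_\cc \in D^b(X_{P,\cc} \times Y_{P,\cc})$ inducing their derived equivalence is already defined over $\qq$, and then to deduce that the resulting $\qq$-linear Fourier--Mukai functor is an equivalence by faithfully flat descent from $\cc$ to $\qq$.

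First I would examine Hosono and Takagi's construction of $\mathcal{K}_\cc$ in \cite{ht1} in order to produce a natural $\qq$-form. The kernel is built geometrically from the incidence correspondence inside $X_P \times Y_P$ whose points encode pairs $(x,y)$ where $y$ is a singular quadric $Q$ in $P$ together with a ruling, and $x \in \tilde{X}_P/\iota$ is an unordered pair lying on a $2$-plane of the chosen ruling of $Q$; the Fourier--Mukai kernel is then assembled from the structure sheaf of this incidence variety together with natural tautological sheaves (restrictions of $\O_{\pp^4 \times \pp^4}(a,b)$, the tautological bundle on $\gg(2,\pp^4)$, etc.). Each of these ingredients is obtained by base changing to $\cc$ a scheme or sheaf which by its very definition is built from the linear system $P$ over $\qq$, the universal quadric $\Q$, the universal bilinear form $\B$, the relative Fano scheme $Z_P$, and the projections \eqref{projections}. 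Consequently there is a canonically defined object $\mathcal{K}_\qq \in D^b(X_P \times Y_P)$ with $\mathcal{K}_\qq \otimes_\qq \cc \simeq \mathcal{K}_\cc$.

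It then remains to verify that the resulting Fourier--Mukai transform $\Phi_{\mathcal{K}_\qq} \colon D^b(X_P) \to D^b(Y_P)$ is an equivalence over $\qq$. Its left and right adjoints are themselves Fourier--Mukai transforms with kernels obtained from $\mathcal{K}_\qq$ by Grothendieck--Serre duality and tensoring with the (trivial, by the Calabi--Yau hypothesis) canonical bundles of $X_P$ and $Y_P$, so these adjoint kernels are also defined over $\qq$. The unit and counit of the adjunctions are then morphisms of kernels in $D^b(X_P \times X_P)$ and $D^b(Y_P \times Y_P)$ over $\qq$, and whether they are quasi-isomorphisms can be tested after faithfully flat base change along $\qq \to \cc$. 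By Hosono--Takagi's theorem \cite{ht1}*{Theorem~8.0.2} the base-changed unit and counit are quasi-isomorphisms, so the same holds over $\qq$ and $\Phi_{\mathcal{K}_\qq}$ is an equivalence.

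The main obstacle is the bookkeeping in the first step: Hosono--Takagi present their equivalence in several guises across \cites{ht1,ht2,ht3}, some via projections from incidence varieties, some via explicit tilting or spherical-functor constructions. The task is to pick whichever description makes the $\qq$-rationality of every constituent sheaf most transparent, and to check that pullbacks, pushforwards, and tensor products commute with base change at each step so that no twist by an element of $\Br(\qq)$ sneaks in. Once such a clean $\qq$-rational model for the kernel is written down, the descent argument in the previous paragraph is formal.
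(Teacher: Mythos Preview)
Your strategy matches the paper's proof: exhibit a $\qq$-rational Fourier--Mukai kernel and then descend the equivalence from $\cc$ (equivalently $\bar{\qq}$) to $\qq$. The paper makes the first step concrete by noting that Hosono--Takagi's kernel is simply the ideal sheaf of the incidence subscheme $\Delta \subset X_P \times Y_P$ obtained by pulling back the flag variety $F(1,2,\pp^4) \subset \gg(1,\pp^4) \times \gg(2,\pp^4)$ along the natural maps $X_P \to \gg(1,\pp^4)$ and $Z_P \to \gg(2,\pp^4)$ and then pushing forward to $X_P \times Y_P$ (so no auxiliary tautological twists are needed), and for the second step it cites \cite{aafh} for the fact that a Fourier--Mukai functor which is an equivalence after base change to the algebraic closure is already an equivalence---precisely the adjoint/unit-counit descent you sketch.
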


\begin{proof}
Write \(X = X_P\) and \(Y = Y_P\).
 As shown in Section~\ref{sec:Y}, there is a natural projection map from $Z \to \gg(2, \pp^4)$.
Since each point in $X$ corresponds to a pair of distinct points in $\pp^4$,
there is a natural map $X\to \gg(1, \pp^4)$ that sends
each point in $X$ to the line in $\pp^4$ containing the corresponding  two points in $\pp^4$.
Consider the subscheme $\Delta\subseteq X\times Y$ formed by pulling back the flag variety $F(1,2,V)\subseteq \gg(1, \pp^4)\times\gg(2, \pp^4)$ to $X\times Z$ and then taking its image in $X\times Y$. In $X\times Z$,
points of this subscheme correspond to the data of 
a quadric in $P$, a plane it contains, and two  
points in $\pp^4$
which lie on that plane.

Hosono and Takagi show in \cite[Theorem~8.0.1]{ht1} that, when working over $\cc$, the ideal sheaf of $\Delta$ is the kernel of a Fourier-Mukai equivalence.
Just as we may define $X,Y$ over $\qq$ in a way that is compatible with base change, we see from our description of $\Delta$ that it may be defined over $\qq$ as well.
To finish, we observe that if
two smooth, projective varieties and a kernel of a Fourier-Mukai functor between their bounded derived categories of coherent sheaves
are defined over a field $K$ and the functor is shown to give a derived equivalence  after base change to $\bar{K}$, then it is an equivalence over $K$ \cite[cf.~proof of Theorem 4]{aafh}. 
\end{proof}

\subsection{Rational points and weak approximation on $X$}

By the following lemma, any linear system $P$ satisfying the hypotheses of Theorem~\ref{thm:main} or Proposition~\ref{prop:q3}
gives rise to a variety $X_P$ where $X_P(\qq)\neq\emptyset$ and therefore $X_P$ does not satisfy weak approximation.

\begin{lem}
If $X_P(\aa_\qq) \neq \emptyset$, then  $X_P$ does not satisfy weak approximation.  
\end{lem}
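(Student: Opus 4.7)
The plan is to exploit the \'etale double cover $\pi \colon \tilde{X}_P \to X_P$ from Section \ref{sec:defx}, defined over $\qq$. Since $\tilde{X}_P$ is a smooth three-dimensional complete intersection in $\pp^4 \times \pp^4$, it is geometrically simply connected by the Lefschetz hyperplane theorem, so the cover is geometrically nontrivial and corresponds to a class $c_P \in H^1(X_P, \mu_2)$ that remains nonzero after base change to $\bar{\qq}$. For each place $v$, pulling back $c_P$ along a $\qq_v$-point yields a locally constant evaluation map
\[ \mathrm{ev}_v \colon X_P(\qq_v) \to H^1(\qq_v, \mu_2) = \qq_v^\times/(\qq_v^\times)^2 \]
recording the quadratic \'etale $\qq_v$-algebra of the fiber. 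Concretely, a $\qq_v$-point $x \in X_P(\qq_v)$ corresponds to a Galois-stable unordered pair $\{p, q\} \subset \pp^4(\overline{\qq_v})$ satisfying the bilinear equations, and $\mathrm{ev}_v(x)$ is trivial if and only if both $p$ and $q$ are individually $\qq_v$-rational.

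The first step is to produce a place $v$ at which $\mathrm{ev}_v$ takes two distinct values. The most natural target is $v = \infty$: a real pair $\{p,q\} \subset \pp^4(\rr)$ gives $\mathrm{ev}_\infty(x_1) = +1$, while a complex-conjugate pair $\{p, \bar p\}$ gives $\mathrm{ev}_\infty(x_2) = -1$. I would try to show that both kinds of real points always exist on $X_P$ whenever $X_P(\rr) \ne \emptyset$, using a mod-$2$ characteristic-class computation in the spirit of Lemma \ref{lem:evaluation_real}: identify the locus of complex-conjugate pairs as a degeneracy locus inside a suitable real parameter space (for instance, the symmetric product of $\pp^4$ minus the diagonal intersected with the locus where the symmetric bilinear forms vanish pairwise) and show its mod-$2$ cohomology class is nontrivial, while using real solubility to ensure the image of $\tilde{X}_P(\rr)$ is likewise nonempty.

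Given nonconstancy of $\mathrm{ev}_v$ at some place $v$, the second step turns this into a Brauer-Manin obstruction. For any $b \in H^1(\qq, \mu_2) = \qq^\times/(\qq^\times)^2$, the cup product yields a $2$-torsion Brauer class $\alpha_b \colonequals c_P \cup b \in \Br(X_P)[2]$, whose evaluation at $x \in X_P(\qq_v)$ is the Hilbert symbol $\inv_v(\alpha_b(x)) = (\mathrm{ev}_v(x), b)_v$. By weak approximation for $\qq^\times$, one may choose $b \in \qq^\times$ so that $(\mathrm{ev}_v(x_1), b)_v \neq (\mathrm{ev}_v(x_2), b)_v$ at the distinguished place $v$. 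Applying Lemma \ref{lem:real_obstruction} to $\alpha_b$ then produces the proper containment $\emptyset \neq X_P(\aa_\qq)^{\alpha_b} \subsetneq X_P(\aa_\qq)$, and weak approximation fails on $X_P$.

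The crux of the argument is the nonconstancy step: a priori it is possible that $\tilde{X}_P(\qq_v) \to X_P(\qq_v)$ is surjective at every place $v$, in which case $\mathrm{ev}_v$ would be identically trivial. I expect the real-place topological argument to succeed for any regular $P$, but if in fact it requires additional input (for example, a singular quadric in $P$ defined over $\qq$, as alluded to in the introduction) the proof would instead perform an explicit Hensel-lifting analysis at the corresponding finite place, where a rank-drop in the universal quadric over $\qq_p$ produces two $\qq_p$-points of $X_P$ with fibral extensions in different square classes.
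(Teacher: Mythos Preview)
The paper's proof is a one-line citation: since $\tilde{X}_P$ is simply connected by Lefschetz and $\tilde{X}_P \to X_P$ is a nontrivial \'etale double cover, $\pi_1((X_P)_{\bar{\qq}}) \cong \zz/2\zz$, and Minchev's theorem \cite{Minchev}*{Theorem~2.4.4} immediately yields failure of weak approximation. Your proposal is in effect an attempt to reprove the degree-$2$ case of Minchev's theorem by hand via the cup-product classes $\alpha_b = c_P \cup b$; that framework is sound, and your step~2 is correct.

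The genuine gap is step~1. You correctly flag nonconstancy of $\mathrm{ev}_v$ as the crux, but the real-place argument you sketch is only a hope: there is no a priori reason that $\tilde{X}_P(\rr) \neq \emptyset$ for every regular $P$, nor that complex-conjugate pairs must always occur, and your finite-place backup is not carried out either. The argument that actually closes this gap, and which drives Minchev's theorem, is a Lang--Weil point count at a large prime $p$ of good reduction. Both $\tilde{X}_P$ and $X_P$ are geometrically integral of dimension $3$, so $\#\tilde{X}_P(\ff_p) = p^3(1+o(1)) = \#X_P(\ff_p)$; hence the degree-$2$ map $\tilde{X}_P(\ff_p) \to X_P(\ff_p)$ has nonempty image but is not surjective once $p \gg 0$. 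Hensel lifting then yields $x_0, x_1 \in X_P(\qq_p)$ with $\mathrm{ev}_p(x_0)$ trivial and $\mathrm{ev}_p(x_1)$ nontrivial, after which your cup-product step with a suitable $b \in \qq^\times$ and Lemma~\ref{lem:real_obstruction} finish the proof.
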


\begin{proof}
As in \cite{ht2}*{Proposition~3.5.3}, the Lefschetz hyperplane theorem guarantees that $\pi_1(\tilde{X}_P) = 0$, and so 
the \'etale double cover $\pi \colon \tilde{X}_P \to X_P$ represents the nontrivial element of $\pi_1(X_P) \simeq \zz/2\zz$. 

By \cite{Minchev}*{Theorem~2.4.4}, this shows $X_P$ does not satisfy weak approximation.
\end{proof}  

Recall that \(H \subset P\) is the discriminant locus of singular quadrics in the linear system \(P\).

\begin{lem} \label{lem:waX}
Given any choice of regular linear system $P$ defined over $\qq$, $H(\qq)\neq\emptyset$ if and only if $X_P(\qq)\neq\emptyset$. 
\end{lem}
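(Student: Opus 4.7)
The plan is to prove both implications by analyzing the linear map \(\psi_v \colon P \to V^\vee\) defined by \(\psi_v(Q) = B_Q(v, -)\) for a fixed \(v \in V\).  Two dual observations organize the argument: \(\ker(\psi_v)\) consists of quadrics in \(P\) having \(v\) as a singular point, while \(\operatorname{Im}(\psi_v)^\perp \subseteq V\) consists of vectors \(w\) with \((v, w) \in \tilde X_P\).

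For the forward direction, I start with \(Q \in H(\qq)\).  Regularity of \(P\) forces \(Q\) to have rank \(3\) or \(4\), and in either case \(\operatorname{Sing}(Q)\) is a nonempty \(\qq\)-rational linear subspace of \(\pp^4\), so I pick a \(\qq\)-rational point \(v \in \operatorname{Sing}(Q)\).  Since \(Q \in \ker(\psi_v)\), the map \(\psi_v\) is not surjective, so \(\operatorname{Im}(\psi_v)^\perp\) is a positive-dimensional \(\qq\)-subspace of \(V\).  Base-point-freeness of \(P\) forces \(v \notin \operatorname{Im}(\psi_v)^\perp\) (otherwise each \(Q_i\) would vanish at \(v\)), so I can choose a \(\qq\)-rational \(p\) in it not proportional to \(v\).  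Then \((v, p) \in \tilde X_P(\qq)\) lies off the diagonal and descends to a point of \(X_P(\qq)\) under the quotient by \(\iota\).

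For the backward direction, I take \(\{x, y\} \in X_P(\qq)\) and set \(L = \qq(x)\), which is either \(\qq\) or a quadratic extension.  Over \(L\), the condition \(B_Q(x, y) = 0\) for all \(Q \in P_L\) places \(y\) in \(\operatorname{Im}(\psi_x)^\perp\), so \(\ker(\psi_x)\) is nontrivial over \(L\) and contains a singular quadric \(Q_x\) with \(x\) as a singular point.  When \(L = \qq\), this \(Q_x\) directly lies in \(H(\qq)\).  When \([L:\qq] = 2\), I consider the Galois conjugate \(Q_y = \sigma(Q_x)\); the pencil they span is a \(\qq\)-rational \(\pp^1 \subset P\) whose intersection with the quintic hypersurface \(H\) is a \(\qq\)-rational divisor of degree \(5\) containing the conjugate pair \(\{Q_x, Q_y\}\).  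Factoring out this pair yields a residual degree-\(3\) \(\qq\)-rational divisor on the pencil, and the plan is to produce a \(\qq\)-rational point in it, supplying the required singular quadric in \(H(\qq)\).

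The hard part will be controlling this residual cubic in the quadratic case, since a priori it could be irreducible over \(\qq\) with no \(\qq\)-rational roots.  I expect the resolution to use the classical fact that the five singular points of a generic pencil of quadrics in \(\pp^4\) form a self-polar simplex; the Galois action, already constrained to swap \(x\) and \(y\), then acts on the three remaining singular points through a restricted subgroup that must fix at least one, producing the desired \(\qq\)-rational singular quadric.  Should this structural argument fail in some degenerate situation, the auxiliary sublinear system of quadrics containing the \(\qq\)-rational line \(\ell = \langle x, y \rangle\) — a \(\pp^2\) inside \(P\) that meets \(H\) in a degree-\(5\) plane curve — provides a secondary locus in which to seek a \(\qq\)-rational singular quadric.
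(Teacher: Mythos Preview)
Your forward direction matches the paper's argument and is in fact slightly more careful: you explicitly invoke base-point-freeness to ensure $v \notin \operatorname{Im}(\psi_v)^\perp$, guaranteeing that the pair $(v,p)$ you produce lies off the diagonal.

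For the backward direction you have spotted something the paper glosses over.  The paper simply writes ``then there exist $v,w\in\pp^4_{\qq}$ so that $wB_iv=0$ for all $i$'', tacitly assuming that a $\qq$-point of $X_P=\tilde X_P/\iota$ lifts to $\tilde X_P(\qq)$; since $\tilde X_P\to X_P$ is a nontrivial \'etale double cover this need not hold, and your case $[L:\qq]=2$ is a genuine extra case.  In the split case $L=\qq$ your argument coincides with the paper's.

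The problem is your proposed resolution of the quadratic case.  The self-polar simplex heuristic does not work as you state it: you are assuming the Galois action on the three residual singular members $R_1,R_2,R_3$ of the pencil factors through $\Gal(L/\qq)\simeq\zz/2\zz$, so that every element, being an involution, must fix one of them.  But nothing forces the $R_i$ to be defined over $L$.  The full group $\Gal(\bar\qq/\qq)$ acts on $\{R_1,R_2,R_3\}$ through its image in $S_3$, which may well be $A_3$ or all of $S_3$; a $3$-cycle then permutes the $R_i$ with no fixed point, and the residual cubic on your $\qq$-rational pencil is irreducible over $\qq$.  Mutual orthogonality of the five simplex vertices with respect to every form in the pencil imposes no further constraint on this action.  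Your fallback via the $\pp^2\subset P$ of quadrics containing the $\qq$-rational line $\ell=\langle x,y\rangle$ is not developed enough to evaluate: a smooth quadric threefold contains a $3$-parameter family of lines, so containing $\ell$ forces no singularity, and the only consequence of regularity here (that $\ell\not\subset\operatorname{Sing}(Q)$ for $Q$ in this $\pp^2$) points in the wrong direction for you.  As written, the quadratic case remains open.
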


\begin{proof}
 Fix a regular linear system $P$ generated by quadrics defined over $\qq$ corresponding to bilinear forms $B_0,\ldots,B_4$.

Suppose $H(\qq)\neq\emptyset$.
Suppose $B_0$ has rank less than $5$. Then we may choose a point $v\in\pp^4_{\qq}$  in its kernel. If we pick a representative $v'$ of $v$ in $\qq^5$, we note $B_0v'=0$ and $\{B_0v',B_1v',B_2v',B_3v',B_4v'\}$
span a subspace of $\qq^5$ of dimension at most $4$. Therefore, we may pick a nonzero vector with coefficients in $\qq$ in the orthogonal complement, giving an element
$w\in\pp^4_{\qq}$ so that 
 $wB_iv=0$  for all $i$, hence the unordered pair $(w,v)$ determines a point on $X_P$. 
For instance, 
for the choice of quadrics in Theorem~\ref{thm:example},
$([0:0:0:0:1],[1:-3:2:4:1])$ is a $\qq$-point of the corresponding variety $X_P$.

If all of the bilinear forms $B_0,\ldots,B_4$ have full rank, then
by the assumption that $H(\qq)$ is not empty,
we may choose a different basis for our linear system that includes a matrix that does not have full rank. Hence,~$X_P(\qq)\neq\emptyset$.

Now suppose $X_P(\qq)\neq\emptyset$. Then there exist $v,w\in\pp^4_{\qq}$ so that
$wB_iv=0$ for all $i$. The five vectors $B_iv$ are contained in the $4$-dimensional space $w^{\perp}$ and are therefore linearly dependent. Thus there exists a linear combination of the $B_i$ where $v$ is contained in its kernel, and so it does not have full rank.
\end{proof}

\begin{bibdiv}
\begin{biblist}

\bib{aafh}{misc}{
      author={Addington, N.},
      author={Antieau, B.},
      author={Frei, S.},
      author={Honigs, K.},
       title={Rational points and derived equivalence},
        date={2019},
        note={Preprint, \texttt{arXiv:1906.02261v1}},
}

\bib{br_not_inv_HHLV}{collection}{
      author={Addington, N.},
       title={The {B}rauer group is not a derived invariant},
        date={2017},
   booktitle={Brauer groups and obstruction problems},
      series={Progr. Math.},
      volume={320},
   publisher={Birkh\"{a}user/Springer, Cham},
       pages={1\ndash 5},
}

\bib{albert}{article}{
      author={Albert, A.~A.},
       title={Symmetric and alternate matrices in an arbitrary field. {I}},
        date={1938},
        ISSN={0002-9947},
     journal={Trans. Amer. Math. Soc.},
      volume={43},
      number={3},
       pages={386\ndash 436},
         url={https://doi.org/10.2307/1990068},
      review={\MR{1501952}},
}

\bib{BVA}{article}{
   author={Berg, J.},
   author={V\'{a}rilly-Alvarado, A.},
   title={Odd order obstructions to the Hasse principle on general K3
   surfaces},
   journal={Math. Comp.},
   volume={89},
   date={2020},
   number={323},
   pages={1395--1416},
   issn={0025-5718},
   review={\MR{4063322}},
   doi={10.1090/mcom/3485},
}

\bib{caldararu}{book}{
      author={Caldararu, A.~H.},
       title={Derived categories of twisted sheaves on {C}alabi-{Y}au
  manifolds},
   publisher={ProQuest LLC, Ann Arbor, MI},
        date={2000},
        ISBN={978-0599-72206-4},
  url={http://gateway.proquest.com/openurl?url_ver=Z39.88-2004&rft_val_fmt=info:ofi/fmt:kev:mtx:dissertation&res_dat=xri:pqdiss&rft_dat=xri:pqdiss:9967459},
        note={Thesis (Ph.D.)--Cornell University},
      review={\MR{2700538}},
}

\bib{cts}{book}{
   author={Colliot-Th\'el\`ene, J.-L.},
   author={Skorobogatov, A.},
   title={The {B}rauer--{G}rothendieck group},
  ISBN = {978-3-030-74248-5},
  publisher = {Springer, to appear.}
}

\bib{CV2}{article}{
      author={Creutz, B.},
      author={Viray, B.},
       title={On {B}rauer groups of double covers of ruled surfaces},
        date={2015},
        ISSN={0025-5831},
     journal={Math. Ann.},
      volume={362},
      number={3-4},
       pages={1169\ndash 1200},
         url={https://doi-org.ezproxy.bu.edu/10.1007/s00208-014-1153-0},
      review={\MR{3368096}},
}

\bib{3264}{book}{
      author={Eisenbud, D.},
      author={Harris, J.},
       title={3264 and all that---a second course in algebraic geometry},
   publisher={Cambridge University Press, Cambridge},
        date={2016},
        ISBN={978-1-107-60272-4; 978-1-107-01708-5},
         url={https://doi.org/10.1017/CBO9781139062046},
      review={\MR{3617981}},
}

\bib{ekedahl}{article}{
    AUTHOR = {Ekedahl, T.},
     TITLE = {An infinite version of the {C}hinese remainder theorem},
   JOURNAL = {Comment. Math. Univ. St. Paul.},
    VOLUME = {40},
      YEAR = {1991},
    NUMBER = {1},
     PAGES = {53--59},
      ISSN = {0010-258X},
review={\MR{1104780}},
}

\bib{sga}{book}{
AUTHOR = {Grothendieck, A.},
     TITLE = {Rev\^{e}tements \'{e}tales et groupe fondamental. {F}asc. {I}:
              {E}xpos\'{e}s 1 \`a 5},
      NOTE = {Troisi\`eme \'{e}dition, corrig\'{e}e,
              S\'{e}minaire de G\'{e}om\'{e}trie Alg\'{e}brique, 1960/61},
 PUBLISHER = {Institut des Hautes \'{E}tudes Scientifiques, Paris},
      YEAR = {1963},
     PAGES = {iv+143 pp. (not consecutively paged) (loose errata)},
  review = {\MR{0217087}},  
}  

\bib{Minchev}{collection}{
      author={Harari, D.},
       title={Weak approximation on algebraic varieties},
        date={2004},
   booktitle={Arithmetic of higher-dimensional algebraic varieties ({P}alo
  {A}lto, {CA}, 2002)},
      series={Progr. Math.},
      volume={226},
   publisher={Birkh\"{a}user Boston, Boston, MA},
       pages={43\ndash 60},
         url={https://doi-org.ezproxy.bu.edu/10.1007/978-0-8176-8170-8_3},
      review={\MR{2029861}},
}

\bib{Harari}{collection}{
      author={Harari, D.},
       title={Obstructions de {M}anin transcendantes},
        date={1996},
   booktitle={Number theory ({P}aris, 1993--1994)},
      series={London Math. Soc. Lecture Note Ser.},
      volume={235},
   publisher={Cambridge Univ. Press, Cambridge},
       pages={75\ndash 87},
         url={https://doi-org.ezproxy.bu.edu/10.1017/CBO9780511662003.004},
      review={\MR{1628794}},
}

\bib{Harris}{book}{
      author={Harris, J.},
       title={Algebraic geometry: A first course},
        date={1992},
        series={Graduate Texts in Mathematics},
        volume={133},
      publisher={Springer-Verlag New York},
}

\bib{githubrepo}{manual}{
  author ={Hashimoto, S.},
  author ={Honigs, K.},
  author ={Lamarche, A.},
  author ={Vogt, I.},
  title={Code},
  year = {2020},
  note={\url{https://github.com/sachihashimoto/weak-approx-CY3fold}},
}    

\bib{HT}{collection}{
      author={Hassett, B.},
      author={Tschinkel, Y.},
       title={Rational points on {K}3 surfaces and derived equivalence},
        date={2017},
   booktitle={Brauer groups and obstruction problems},
      series={Progr. Math.},
      volume={320},
   publisher={Birkh\"{a}user/Springer, Cham},
       pages={87\ndash 113},
      review={\MR{3616008}},
}

\bib{HVA}{article}{
   author={Hassett, B.},
   author={V\'{a}rilly-Alvarado, A.},
   title={Failure of the Hasse principle on general $K3$ surfaces},
   journal={J. Inst. Math. Jussieu},
   volume={12},
   date={2013},
   number={4},
   pages={853--877},
   issn={1474-7480},
   review={\MR{3103134}},
   doi={10.1017/S1474748012000904},
}

\bib{HVAV}{article}{
      author={Hassett, B.},
      author={V\'{a}rilly-Alvarado, A.},
      author={Varilly, P.},
       title={Transcendental obstructions to weak approximation on general {K}3
  surfaces},
        date={2011},
        ISSN={0001-8708},
     journal={Adv. Math.},
      volume={228},
      number={3},
       pages={1377\ndash 1404},
         url={https://doi-org.ezproxy.bu.edu/10.1016/j.aim.2011.06.017},
      review={\MR{2824558}},
    }
    
\bib{hatcher}{misc}{
      author={Hatcher, A.},
       title={Vector bundles and {K}-theory},
        date={2017},
        note={\url{https://pi.math.cornell.edu/~hatcher/VBKT/VB.pdf}},
}    

\bib{ht2}{misc}{
      author={Hosono, S.},
      author={Takagi, H.},
       title={Duality between ${S}^2{P}^4$ and the {D}ouble {Q}uintic
  {S}ymmetroid},
        date={2013},
        note={Preprint, \texttt{arXiv:1302.5881v2}},
}

\bib{ht3}{article}{
      author={Hosono, S.},
      author={Takagi, H.},
       title={Mirror symmetry and projective geometry of {R}eye congruences
  {I}},
        date={2014},
        ISSN={1056-3911},
     journal={J. Algebraic Geom.},
      volume={23},
      number={2},
       pages={279\ndash 312},
      review={\MR{3166392}},
}

\bib{ht1}{article}{
      author={Hosono, S.},
      author={Takagi, H.},
       title={Double quintic symmetroids, {R}eye congruences, and their derived
  equivalence},
        date={2016},
        ISSN={0022-040X},
     journal={J. Differential Geom.},
      volume={104},
      number={3},
       pages={443\ndash 497},
         url={http://projecteuclid.org/euclid.jdg/1478138549},
      review={\MR{3568628}},
}

\bib{FMTransforms}{book}{
      author={Huybrechts, D.},
       title={Fourier-{M}ukai transforms in algebraic geometry},
      series={Oxford Mathematical Monographs},
   publisher={The Clarendon Press, Oxford University Press, Oxford},
        date={2006},
        ISBN={978-0-19-929686-6; 0-19-929686-3},
         url={https://doi.org/10.1093/acprof:oso/9780199296866.001.0001},
      review={\MR{2244106}},
}

\bib{Ieronymou}{article}{
      author={Ieronymou, E.},
       title={Diagonal quartic surfaces and transcendental elements of the
  {B}rauer groups},
        date={2010},
        ISSN={1474-7480},
     journal={J. Inst. Math. Jussieu},
      volume={9},
      number={4},
       pages={769\ndash 798},
         url={https://doi-org.ezproxy.bu.edu/10.1017/S1474748010000149},
      review={\MR{2684261}},
}

\bib{KKS}{book}{
      author={Kato, K.},
      author={Kurokawa, N.},
      author={Saito, T.},
       title={Number theory. 2},
      series={Translations of Mathematical Monographs},
   publisher={American Mathematical Society, Providence, RI},
        date={2011},
      volume={240},
        ISBN={978-0-8218-1355-3},
        note={Introduction to class field theory, Translated from the 1998
  Japanese original by Masato Kuwata and Katsumi Nomizu, Iwanami Series in
  Modern Mathematics},
      review={\MR{2817199}},
    }

\bib{maulikpoonen}{article}{
      author={Maulik, D.},
      author={Poonen, B.},
       title={N\'{e}ron-{S}everi groups under specialization},
        date={2012},
        ISSN={0012-7094},
     journal={Duke Math. J.},
      volume={161},
      number={11},
       pages={2167\ndash 2206},
         url={https://doi-org.libproxy.uoregon.edu/10.1215/00127094-1699490},
      review={\MR{2957700}},
}

\bib{MSTVA}{article}{
   author={McKinnie, K.},
   author={Sawon, Justin},
   author={Tanimoto, Sho},
   author={V\'{a}rilly-Alvarado, Anthony},
   title={Brauer groups on K3 surfaces and arithmetic applications},
   conference={
      title={Brauer groups and obstruction problems},
   },
   book={
      series={Progr. Math.},
      volume={320},
      publisher={Birkh\"{a}user/Springer, Cham},
   },
   date={2017},
   pages={177--218},
   review={\MR{3616011}},
}

\bib{milne_2003}{unpublished}{
      author={Milne, J.~S.},
       title={Class field theory},
        date={2003},
        note={\url{https://www.jmilne.org/math/CourseNotes/CFT.pdf}},
}

\bib{milnorstasheff}{book}{
      author={Milnor, J.~W.},
      author={Stasheff, J.~D.},
       title={Characteristic classes},
   publisher={Princeton University Press, Princeton, N. J.; University of Tokyo
  Press, Tokyo},
        date={1974},
        note={Annals of Mathematics Studies, No. 76},
      review={\MR{0440554}},
}

\bib{poonen_2017}{book}{
      author={Poonen, B.},
       title={Rational points on varieties},
      series={Graduate Studies in Mathematics},
   publisher={American Mathematical Society, Providence, RI},
        date={2017},
      volume={186},
        ISBN={978-1-4704-3773-2},
      review={\MR{3729254}},
}

\bib{poonen_stoll}{article} {
    AUTHOR = {Poonen, B. and Stoll, M.},
     TITLE = {The {C}assels-{T}ate pairing on polarized abelian varieties},
   JOURNAL = {Ann. of Math. (2)},
    VOLUME = {150},
      YEAR = {1999},
    NUMBER = {3},
     PAGES = {1109--1149},
      ISSN = {0003-486X},
  MRNUMBER = {1740984},
       DOI = {10.2307/121064},
       URL = {https://doi.org/10.2307/121064},
       review={\MR{1740984}},
}

\bib{Reid-thesis}{book}{
      author={Reid, M.},
       title={The complete intersection of two or more quadrics},
        date={1972},
        note={Ph.D.\ thesis, Trinity College, Cambridge},
}

\bib{skorobogatov_2001}{book}{
      author={Skorobogatov, A.},
       title={Torsors and rational points},
      series={Cambridge Tracts in Mathematics},
   publisher={Cambridge University Press, Cambridge},
        date={2001},
      volume={144},
        ISBN={0-521-80237-7},
         url={https://doi.org/10.1017/CBO9780511549588},
      review={\MR{1845760}},
}

\bib{wang}{article}{
      author={Wang, X.},
       title={Maximal linear spaces contained in the based loci of pencils of
  quadrics},
        date={2018},
     journal={Algebr. Geom.},
      volume={5},
      number={3},
       pages={359\ndash 397},
         url={https://doi.org/10.14231/AG-2018-011},
      review={\MR{3800357}},
}

\bib{Wittenberg}{collection}{
      author={Wittenberg, O.},
       title={Transcendental {B}rauer-{M}anin obstruction on a pencil of
  elliptic curves},
        date={2004},
   booktitle={Arithmetic of higher-dimensional algebraic varieties ({P}alo
  {A}lto, {CA}, 2002)},
      series={Progr. Math.},
      volume={226},
   publisher={Birkh\"{a}user Boston, Boston, MA},
       pages={259\ndash 267},
         url={https://doi-org.ezproxy.bu.edu/10.1007/978-0-8176-8170-8_15},
      review={\MR{2029873}},
}

\end{biblist}
\end{bibdiv}

\newpage 
\appendix

\section{The Brauer group of \\Hosono and Takagi's threefold\\ by Nicolas Addington}
\label{appendix}

In \cite{ht_der_eq}, Hosono and Takagi constructed a derived equivalence between a pair of Calabi--Yau threefolds $X$ and $Y$ with $\pi_1(X) = \zz/2$ and $\pi_1(Y) = 0$, and an explicit Brauer class of order 2 on $Y$.  In \cite{br_not_inv} I showed that this derived equivalence and a little topological K-theory yield an exact sequence
\[ 0 \to \zz/2 \to \Br(Y) \to \Br(X) \to 0. \]
The purpose of this appendix is to prove:
\begin{prop}\label{prop:brauerX}
$\Br(X) = 0$, and hence $\Br(Y) = \zz/2$.
\end{prop}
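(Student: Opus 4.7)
The plan is to compute $\Br(X)$ by means of the Hochschild--Serre spectral sequence for the \'etale Galois double cover $\pi \colon \tilde{X} \to X$ with deck group $\zz/2$, which is the universal cover since $\pi_1(X) = \zz/2$ and $\tilde{X}$ is simply connected. Since $X$ is a smooth projective Calabi--Yau threefold over $\cc$ with $H^2(X, \O_X) = 0$ and $H^3(X, \O_X) = \cc$, the exponential sequence identifies $\Br(X) = H^2(X, \O_X^*)_{\tors} = H^3(X, \zz)_{\tors}$; this is the object I want to vanish.

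I would analyze the spectral sequence
\[ E_2^{p,q} = H^p(\zz/2, H^q(\tilde{X}, \gg_m)) \Rightarrow H^{p+q}(X, \gg_m) \]
by showing that all three terms with $p + q = 2$ on the $E_2$ page vanish, and hence so do the associated graded pieces of $\Br(X) = H^2(X, \gg_m)$. First, $E_2^{2,0} = H^2(\zz/2, \cc^*) = 0$ because $\cc^*$ is $2$-divisible. Second, weak Lefschetz applied to $\tilde{X} \hookrightarrow \pp^4 \times \pp^4$ (together with $h^{2,0}(\tilde{X}) = 0$ and torsion-freeness of $H^2(\tilde{X}, \zz)$) identifies $\Pic(\tilde{X}) = \zz H_1 \oplus \zz H_2$, where $H_i$ is the restriction of the hyperplane class from the $i$th factor. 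Since the involution $\iota$ exchanges the two $\pp^4$ factors, it swaps $H_1 \leftrightarrow H_2$, so $\Pic(\tilde{X}) \cong \zz[\zz/2]$ as a $\zz/2$-module, and Shapiro's lemma yields $E_2^{1,1} = H^1(\zz/2, \Pic(\tilde{X})) = 0$.

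The main obstacle is the term $E_2^{0,2} = \Br(\tilde{X})^{\zz/2}$, so the crux is to prove that $\Br(\tilde{X}) = H^3(\tilde{X}, \zz)_{\tors} = 0$. Simple connectivity makes $H^2(\tilde{X}, \zz)$ automatically torsion-free, but the Lefschetz hyperplane theorem only controls cohomology strictly below the middle dimension $3 = \dim \tilde{X}$, so ruling out torsion in $H^3$ is more delicate. To clear this hurdle I would analyze the integral cohomology of the complete intersection $\tilde{X}$ of five ample $(1,1)$-divisors in $\pp^4 \times \pp^4$ via its Koszul resolution, or equivalently via topological $K$-theory (using the Atiyah--Hirzebruch spectral sequence and the integrality of the Chern character), exploiting that $H^*(\pp^4 \times \pp^4, \zz)$ is torsion-free and concentrated in even degrees. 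Once $\Br(\tilde{X}) = 0$ is in hand, the spectral sequence immediately yields $\Br(X) = 0$; combined with the exact sequence $0 \to \zz/2 \to \Br(Y) \to \Br(X) \to 0$ recalled at the start of the appendix, this forces $\Br(Y) = \zz/2$ as claimed.
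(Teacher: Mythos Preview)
Your overall strategy is correct and parallels the paper's, but with two differences worth noting. First, you run Hochschild--Serre with $\gg_m$ coefficients targeting $H^2(X,\gg_m)=\Br(X)$, whereas the paper uses $\zz$ coefficients targeting $H^3(X,\zz)$, whose torsion is $\Br(X)$. Both work; your computations of $E_2^{2,0}=H^2(\zz/2,\cc^*)=0$ and $E_2^{1,1}=H^1(\zz/2,\zz[\zz/2])=0$ are fine, and in the paper's version the analogous vanishing on the $p+q=3$ line comes from $H^*(\zz/2,\zz G)=0$ in positive degrees and $H^{\mathrm{odd}}(\zz/2,\zz)=0$.

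Second, and more substantively, you overcomplicate the crux. You correctly reduce to $\Br(\tilde X)=H^3(\tilde X,\zz)_{\tors}=0$, but then assert that Lefschetz ``only controls cohomology strictly below the middle dimension'' and propose Koszul resolutions or Atiyah--Hirzebruch. In fact this step is immediate, and is exactly how the paper does it: the universal coefficient theorem gives $H^3(\tilde X,\zz)_{\tors}\cong H_2(\tilde X,\zz)_{\tors}$, and Lefschetz \emph{does} compute $H_2(\tilde X,\zz)\cong H_2(\pp^4\times\pp^4,\zz)=\zz^2$, which is torsion-free. So the ``delicate'' step you flag is a one-liner; your proposed K-theoretic or Koszul arguments are vague as stated and would require real work to make precise, whereas the universal coefficient observation finishes the proof instantly.
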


First recall the construction of $X$.  We choose 5 symmetric bilinear forms on $\cc^5$, which determine five divisors of type $(1,1)$ in $\pp^4 \times \pp^4$, symmetric under the action of $G := \zz/2$ that exchanges the two factors, and let $\tilde X \subset \pp^4 \times \pp^4$ be their intersection.  For generic choices, $\tilde X$ is a smooth threefold that does not intersect the diagonal, so $G$ acts freely on $\tilde X$.  Then $X := \tilde X / G$. \bigskip

Next recall that $\Br(X)$ is isomorphic to the torsion subgroup of $H^3(X,\zz)$, because $H^2(\mathcal O_X) = 0$.  We will use the Hochschild--Serre spectral sequence
\[ E_2^{p,q} = H^p(G, H^q(\tilde X, \zz)) \ \Longrightarrow \ H^{p+q}(X,\zz), \]
which is the Leray spectral sequence for the fibration
\[ \xymatrix@C=1em{
\tilde X \ar@{^(->}[rr] && \tilde X \times_G EG \ar[d] \ar@{}[r]|-{\displaystyle \simeq} & X \ \\
&& BG.
} \]
Thus we need to compute the $G$-action on $H^i(\tilde X,\zz)$ for $i = 0, 1, 2, 3$.  In fact we only need a little information about $H^3$, which is lucky because it seems hard to get complete information. \bigskip

By the Lefschetz hyperplane theorem we have
\[ H^i(\tilde X, \zz) = H^i(\pp^4 \times \pp^4, \zz) \qquad \text{ for } i < 3, \]
and similarly with $H_i$.  Thus:
\begin{itemize}
\item $H^0(\tilde X, \zz) = \zz$ with the trivial $G$-action.
\item $H^1(X,\zz) = 0$.
\item $H^2(X,\zz) = \zz^2$ with the $G$-action that exchanges the two factors.
\item $H^3(\tilde X, \zz)$ is torsion free: by the universal coefficient theorem, the torsion in $H^3$ is isomorphic to the torsion in $H_2$.
\end{itemize}
\bigskip

The cohomology of $G$ with coefficients in the trivial module $\zz$ is
\[ H^*(G,\zz) = \zz,\ 0,\ \zz/2,\ 0,\ \zz/2,\ 0,\ \zz/2,\ \dotsc. \]
With coefficients in the regular representation $\zz^2$, also known as $\zz G$, it is
\[ H^*(G,\zz G) = \zz,\ 0,\ 0,\ 0,\ 0,\ 0,\ 0,\ \dotsc. \]
This can be found in many textbooks, for example \cite{df}*{\S17.2}. \bigskip

Now the $E_2$ page of the spectral sequence is

\[ \def \arraystretch {1.2}
\begin{array}{|ccccccccc}
q\uparrow \\
\\
\vdots & \vdots \\
H^3(\tilde X,\zz)^G & ? & ? & ? & ? & ? & \dotsb \\
\zz & 0 & 0 & 0 & 0 & 0 & \dotsb \\
0 & 0 & 0 & 0 & 0 & 0 & \dotsb \\
\zz & 0 & \zz/2 & 0 & \zz/2 & 0 & \dotsb  & & p \to\\
\hline
\end{array} \]
\medskip

\noindent where $H^3(\tilde X, \zz)^G$ denotes the invariant submodule of $H^3(\tilde X, \zz)$.  This is the only term that can contribute to $H^3(X,\zz)$, and it is torsion-free.  On the $E_4$ page there may be a non-zero differential $H^3(\tilde X, \zz)^G \to \zz/2$, but its kernel is still torsion-free.  Thus $H^3(X,\zz)$ is torsion-free, as desired.

\begin{bibdiv}
\begin{biblist}

\bib{br_not_inv}{collection}{
      author={Addington, N.},
       title={The {B}rauer group is not a derived invariant},
        date={2017},
   booktitle={Brauer groups and obstruction problems},
      series={Progr. Math.},
      volume={320},
   publisher={Birkh\"{a}user/Springer, Cham},
       pages={1\ndash 5},
}

\bib{df}{book}{
      author={Dummit, D.},
      author={Foote, R.},
       title={Abstract algebra},
     edition={Third},
   publisher={John Wiley \& Sons, Inc., Hoboken, NJ},
        date={2004},
}

\bib{ht_der_eq}{article}{
      author={Hosono, S.},
      author={Takagi, H.},
       title={Double quintic symmetroids, {R}eye congruences, and their derived
  equivalence},
        date={2016},
        ISSN={0022-040X},
     journal={J. Differential Geom.},
      volume={104},
      number={3},
       pages={443\ndash 497},
         url={http://projecteuclid.org/euclid.jdg/1478138549},
      review={\MR{3568628}},
}

\end{biblist}
\end{bibdiv}

\bigskip

\ContactInfo

\end{document}